\documentclass{article}

\usepackage{amsmath,amsfonts,amssymb,amsthm,graphicx,url,subfigure,float}
\usepackage[colorlinks=true]{hyperref}
\usepackage{pdfsync}
\usepackage{dsfont}

\newcommand{\BE}{\begin{equation}}
\newcommand{\EE}{\end{equation}}
\newtheorem{propo}{{P}roposition}

\newtheorem{defi}{{D}efinition}
\newtheorem{lem}{{L}emma}
\newtheorem{theo}{{T}heorem}
\newtheorem*{theos}{{T}heorem}
\newtheorem*{propos}{{P}roposition}
\newtheorem{cor}{{C}orollary}

\newcommand{\fr}{\frac{1}{2}}
\newcommand{\I}{\mathbb{I}}

\newcommand{\N}{\mathbb{N}}
\newcommand{\R}{\mathbb{R}}

\newcommand{\h}{\mathcal{H}}

\renewcommand{\leq}{\leqslant}
\renewcommand{\geq}{\geqslant}

\theoremstyle{definition}\newtheorem{remark}{Remark}
\theoremstyle{definition}\newtheorem{example}{Example}

\title{Sub-Riemannian Geometry and Geodesics in Banach Manifolds}

\date{}

\author{Sylvain Arguill\`ere}

\topmargin -1cm
\textheight 21cm
\textwidth 15cm 
\oddsidemargin 0.75cm

\begin{document}

\maketitle

\begin{abstract}
In this paper, we define and study sub-Riemannian structures on Banach manifolds. We obtain extensions of the Chow-Rashevski theorem for exact controllability, and give conditions for the existence of a Hamiltonian geodesic flow despite the lack of a Pontryagin Maximum Principle in the infinite dimensional setting.
\end{abstract}

\section*{Introduction}

A sub-Riemannian manifold is a smooth manifold $M$ of finite dimension, endowed with a distribution of subspaces $\Delta\subset TM$, together with a smooth Riemannian metric $g$ on $\Delta$ \cite{SRBOOK,MBOOK}. This allows the definition of horizontal vector fields (resp. horizontal curves) which are those that are almost everywhere tangent to the distribution. We can then define the length and action of a horizontal curve thanks to $g$ and, just like in Riemannian manifolds, the sub-Riemannian distance between two points, and the notion of sub-Riemannian geodesic.

There are two foundational results in sub-Riemannian geometry:
\begin{itemize}
\item The Chow-Rashevksi theorem (generalized by Sussmann to the orbit theorem \cite{SUSS}). It states that if the iterated Lie brackets of horizontal vector fields span the whole tanget bundle, then any two points can be connected by a horizontal curve. This is the problem of \textit{controllability}.
\item The Pontryagin Maximum Principle (PMP) \cite{PBOOK} from optimal control theory. It states that there are two kinds of sub-Riemannian geodesics. 
\begin{itemize}
\item The \textit{normal} geodesics, which are the projection to $M$ of the Hamiltonian flow
$$\left\lbrace
\begin{aligned}
\dot{q}(t)&=\partial_ph(q(t),p(t)),\\
\dot{p}(t)&=-\partial_qh(q(t),p(t)),
\end{aligned}\right.
$$
where the Hamiltonian $h$ of the system is defined on $T^*M$ by
$$
h(q,p)=\max_{v\in\Delta_q} (p(v)-\frac{1}{2}g_q(v,v)),\quad q\in M,\ p\in T_q^*M.
$$
The converse is true: any such projection $q(\cdot)$ of the Hamiltonian flow is indeed a geodesic.
\item The \textit{abnormal} geodesics, which are among the singular curves of the structure. Singular curves only depend on the distribution $\Delta$, and are the projection to $M$ of curves $t\mapsto (q(t),p(t))$ on $T^*M$ that satisfy the so-called \textit{abnormal Hamiltonian equations}
$$\left\lbrace
\begin{aligned}
\dot{q}(t)&=\partial_pH^0(q(t),p(t),v(t)),\\
\dot{p}(t)&=-\partial_qH^0(q(t),p(t),v(t)),\\
0&=\partial_vH^0(q(t),p(t),v(t)),
\end{aligned}\right.
$$
with $v(t)=\dot{q}(t)$ and $H(q,p,v)=p(v),$ $q\in M$, $v\in\Delta_q$, $p\in T^*_qM$. The converse is not true in this case: there are some singular curves that are not geodesics.
\end{itemize} 
\end{itemize}

The purpose of this paper is to lay the foundation to infinite dimensional sub-Riemannian geometry with a wide range of distributions $\Delta$ (for example, $\Delta$ might be dense in infinite dimensions, as in \cite{ATY,AT}), and generalize those two results. Many difficulties appear, since none of the methods used in finite dimensions work. For example, it is well-known that there is no PMP in infinite dimensions \cite{LY}, though some work has been done in this direction in \cite{GMV} for certain special cases. We will also see that the controllability problem (and more generally, the problem of finding the set of points to which a fixed $q$ can be connected) is much harder to solve.

Another problem is the possibility that $g$ be a \textit{weak metric}, that is, that the norm it induces on each horizontal subspace is not complete. This makes the problem of existence of geodesics much more complicated even for Riemannian manifolds \cite{KMBOOK,MM}.

In the first section of this paper,  we give the various definitions for a sub-Riemannian structure on a Banach manifold $M$. We consider horizontal distributions given by $\Delta=\xi(\h)$, where $\h$ is a vector bundle over $M$ and $\xi:\h\rightarrow TM$ a smooth vector bundle morphism. The metric $g$ is directly defined on $\h$ instead of $\Delta$. A curve $q(\cdot)$ is then horizontal if it satisfies an equation of the form
$$
\dot{q}(t)=\xi_{q(t)}u(t)
$$ 
for some control $u(t)\in \h_{q(t)}$ for almost every $t$. This definition allows to consider dense distributions, and the infinite-dimensional equivalent to rank-varying distributions.

In the Section 2, we study the case of approximate and exact controllability. In particular, we see that the natural extension of the Chow-Rashevski theorem gives approximate controllability, and that it is in general impossible to expect exact controllability. However, we do introduce so-called \textit{strong} Chow-Rashevski conditions, which let us prove the following result.
\begin{theos}
 Assume that around any point of $M$, there are $r$ horizontal vector fields $X_1,\dots, X_r$ and $k\in\N\setminus\{0\}$ such that any tangent vector can be written as a span of iterated Lie brackets of at most $k$ horizontal vector fields among the $X_i$, and one other horizontal vector field.
 
Then any two points can be joined by a horizontal curve. Moreover, the sub-Riemannian distance induces on $M$ a topology that is coarser than the intrinsic manifold topology.
\end{theos}

Then, in Section 3, we investigate the sub-Riemannian geodesics. We discuss the appearance of \textit{elusive geodesics}, which cannot be characterized by a Hamiltonian equation, and therefore prevent the proof of a PMP. This is due to the fact that the differential of the \textit{endpoint map} (which is the smooth map that to a control $u(\cdot)$ associates the final point of the corresponding horizontal curve) may have dense image.

We do however obtain the following partial converse to a PMP.
\begin{propos}
Fix $t\mapsto q(t)$ a horizontal curve with control $u(\cdot)$. Assume that there exists $t\mapsto p(t)\in T^*_{q(t)}M\setminus\{0\}$ such that $(q(\cdot),p(\cdot))$ satisfies
\begin{equation}\label{eq:int}
\left\lbrace\begin{aligned}
\dot{q}(t)&=\partial_ph(q(t),p(t)),\\
\dot{p}(t)&=-\partial_qh(q(t),p(t)),
\end{aligned}\right.
\end{equation}
with $h(q,p)=\max_{u\in\h_q}(p(\xi_qu)-\frac{1}{2}g_q(u,u))$. Then $q(\cdot)$ is a critical point of the sub-Riemannian action with fixed endpoint.

If $(q(\cdot),p(\cdot))$ satisfies
$$
\left\lbrace\begin{aligned}
\dot{q}(t)&=\partial_pH^0(q(t),p(t),u(t)),\\
\dot{p}(t)&=-\partial_qH^0(q(t),p(t),u(t)),\\
0&=\partial_uH^0(q(t),p(t),u(t)),
\end{aligned}\right.
$$
with $H^0(q,p,u)=p(\xi_qu)$, then $q(\cdot)$ is a \textit{singular curve}, that is, a critical point of the endpoint map.
\end{propos}
The next step is to prove that integral curves of the Hamiltonian flow associated to $h$ are indeed geodesics. However, if $g$ is a weak metric, $h$ may only be defined on a dense sub-bundle of $T^*M$, on which a Hamiltonian flow may not even be defined. This is already a well-known problem in Riemannian geometry, where weak metric may not have Levi-Civita connections. However, under the assumption that $h$ defines a Hamiltonian flow on a smooth dense sub-bundle $\tau M\subset T^*M$, we prove curves that follow this flow do project to geodesics.
\begin{theos}
Let $\tau M$ be a smooth dense sub-bundle of $T^*M$ on which $h$ is well-defined and admits a $\mathcal{C}^2$ symplectic gradient with respect to the restriction to $\tau M$ of the canonical symplectic form on $T^*M$. These assumptions are always true for strong structures. 

Then integral curves of this symplectic gradient, which are curves that locally satisfy
$$\left\lbrace\begin{aligned}
\dot{q}(t)&=\partial_ph(q(t),p(t)),\\
\dot{p}(t)&=-\partial_qh(q(t),p(t)),
\end{aligned}\right.$$
are local geodesics of the sub-Riemannian structures.
\end{theos}

From there, many questions are still unanswered, the most important of which would be to find a good way to characterize the so-called elusive geodesics.

\section{Banach sub-Riemannian geometry}

 The most common sub-Riemannian structure on a manifold $M$ is given by the restriction of a Riemannian metric to a certain smooth distribution of subspaces $\Delta\subset TM$ of the tangent bundle, the so-called  \textit{horizontal distribution}. However, it can be useful to allow the rank of this distribution to change, which is why we will use the point of view of rank-varying distributions of subspaces \cite{SRBOOK,ABCG}. In this section, we introduce relative tangent spaces and use them to define sub-Riemannian geometry on infinite dimensional manifolds.

\subsection{Notations} 
 
For the rest of this section, fix $M$ be a connected smooth Banach manifold of class $\mathcal{C}^\infty$, modelled on a Banach space $\mathrm{B}$. For any vector bundle $\mathcal{E}\rightarrow M$ over $M$ with typical fiber $\mathrm{E}$, we denote $\Gamma(\mathcal{E})$ the space of smooth sections of $\mathcal{E}$. Finally, for $I=[a,b]$ an interval of $\R$, we define
\begin{equation}\label{eq:h1l2}
H^1\times L^2(I,\mathcal{E})=\{(q(\cdot),e(\cdot)):I\rightarrow\mathcal{E}\mid q(\cdot)\in H^1(I,M),\ e(\cdot)\in L^2(I,\mathrm{E})\text{ in a trivialization}\}.
\end{equation}
Here $H^1(I,M)$ denotes the space of curves that are of Sobolev class $H^1$ in local coordinates. Any such curve is continuous \cite{KMBOOK}, which lets us define a smooth trivialization of $\mathcal{E}$ above it. 

\subsection{Definitions}

Let us start by defining {relative tangent spaces}.

\begin{defi} A \textbf{relative tangent space} on $M$, is a couple $(\h,\xi)$, with $\h$ a smooth Banach vector bundle $\pi^\h:\h\rightarrow M$, with fibers isomorphic to a fixed Banach vector space $\mathrm{H}$, and $\xi:\h\rightarrow TM$ is a smooth vector bundle morphism. Such a couple $(\h,\xi)$ is also called an \textit{anchored vector bundle} in the litterature.

The corresponding \textbf{horizontal distribution} is given by the image $\Delta=\xi(\h)\subset TM$.
\end{defi}

%\begin{remark}The notion of relative tangent bundles, often used in the study of Banach Lie Algebroids \cite{Mac}, also lets one define sub-Riemannian geometries with rank-varying horizontal distribution in finite dimensions, as in \cite{ABCG}.
%\end{remark}

Now let us fix $(\h,\xi)$ a relative tangent bundle on $M$, and $\Delta=\xi(\h)$ the corresponding distribution of horizontal subspaces.

%First, recall that for any $q_0\in M$ the space $\mathcal{C}^1_{q_0}(0,1;M)$ of  $\mathcal{C}^1$-curves on $M$ a manifold modeled on $H^1_{0}(0,1;X)$, the space of such curves on $X$ that begin at $0$. Then, a smooth vector bundle $\pi:\h\rightarrow M$ defines the smooth vector bundle $\Pi: H_{q_0}^1\times L^2(0,1;\h)\rightarrow H^1_{q_0}(0,1;M)$ with typical fiber $L^2(0,1;H)$ as the set of $L^2$-curves in $\h$ that project through $\pi$ on $H^1_{q_0}$-curves in $M$.

\begin{defi} A \textbf{horizontal vector field} is a vector field $X\in \Gamma(TM)$ that is everywhere tangent to $\Delta$. Equivalently, $X$ is a horizontal vector field when there exists a section $u\in \Gamma(\h)$ such that
$$
\forall q\in M,\quad X(q)=\xi_qu(q).
$$
A curve $q(\cdot)$ of class of Sobolev class $H^1$ on $M$ defined on an interval $I$ is said to be a \textbf{horizontal curve} if we can find a lift $t\mapsto u(t)\in\h_{q(t)}$ of $q(\cdot)$ to $\h$ such that $(q(\cdot),u(\cdot))\in H^1\times L^2(I,\h)$ and
$$
\dot{q}(t)=\xi_{q(t)}u(t),\quad \text{a.e.}\ t\in I.
$$ 
The couple $(q(\cdot),u(\cdot))$ is a \textbf{horizontal system}, with $u$ the \textbf{control} and $q$ the \textbf{trajectory} of the system.
\end{defi}

Endowing $\h$ with a Riemannian metric, we obtain a sub-Riemannian structure which will allow the definition of sub-Riemannian length, action and distance in the next section. Much like in the Riemannian case, one distinguishes two types of metrics: weak and strong metrics.

\begin{defi}
A \textit{weak sub-Riemannian structure} on $M$ is a triple $(\h,\xi,g)$ where $(\h,\xi)$ is a relative tangent space on $M$, and $g:\h\times\h\rightarrow \R$ is a smooth positive definite symmetric bilinear form on each fiber $\h_q$.

The structure is said to be a \textit{strong} when both topologies coincide. In this case, $g$ defines a Hilbert product on each fiber, making $\h$ into a Hilbert bundle.

%A sub-Riemannian manifold is called \textit{regular} when the vector bundle morphism is one-to-one. In this case, $\h$ is identified to a subbundle of $TM$, and $\xi$ with the inclusion map.
\end{defi}

\begin{example} Let $\h$ be a distribution of closed subspaces on a Banach manifold $M$, and $g$ be a weak Riemannian metric on $M$. Let $i$ be the inclusion map $\mathcal{H}\rightarrow TM$. Then $(\mathcal{H},i,g_{\vert \h})$ is a regular sub-Riemannian metric. 

A curve $t\mapsto q(t)$ of Sobolev class $H^1$ is horizontal when
$$
\dot{q}(t)\in \h_{q(t)},\quad \text{a.e.}\ t.
$$
The case where $\mathcal{H}$ admits a smooth and closed complement was partially studied in \cite{GMV}, in the more general setting of convenient spaces.
\end{example}

\begin{example}
Take $M=H\times A(H)$, with $(H,\left<\cdot,\cdot\right>)$ a Hilbert space and $A(H)$ the set of Hilbert-Schmidt skew-symmetric linear operators on $H$. Then let $\h=M\times H$ and, for $(q,A)\in M$, define $\xi_{q,A}: H\rightarrow T_qM\simeq H\times A(H)$ by
$$
\xi_{q,A}(u)=(u,\frac{1}{2}q\wedge u)\in H\times A(H),
$$
where the operator $q\wedge u$ satisfies $q\wedge u(w)=\left<q,w\right>u-\left<u,w\right>q$. The bilinear form $g$ can be defined simply by $g(u,v)=\left<u,v\right>$. This is a strong sub-Riemannian structure. Horizontal curves $t\mapsto (q(t),A(t))$ satisfy
$$
\dot{q}(t)=u(t),\quad \dot{A}(t)=\frac{1}{2}q(t)\wedge u(t),\quad \text{a.e.}\ t,\quad u\in L^2(0,1;H).
$$
Horizontal vector fields are of the form
$$
X(q,A)=(u(q,A),\frac{1}{2}q\wedge u(q,A)),\quad u:M\rightarrow H.
$$
\end{example}

\begin{example}\label{ex:diff}
Let $M=\mathcal{D}^s(N)$ the topological group of diffeomorphisms of Sobolev class $H^s$, $s\geq d/2+1$, of a $d$-dimensional compact manifold $N$. Endow $N$ with a smooth relative tangent space $(\mathcal{H}_N,\xi_N)$. This group is a smooth Hilbert manifold, with tangent space at $\varphi$ given by $\Gamma^s(TN)\circ\varphi$ \cite{EM,O,S}, with $\Gamma^s(TN)$ the space of $H^s$-vector fields on $N$.

Consider the relative tangent space $(\mathcal{D}^s(N)\times\Gamma^s(\mathcal{H}_N,\xi)$, with
$$
\xi_\varphi(u)(x)=\xi_{N,\varphi(x)}(u(\varphi(x)).
$$
In other words, a vector $v\in T_\varphi\mathcal{D}^s(N)$ is horizontal if and only if $v\circ\varphi^{-1}$, which is a horizontal vector field of $N$ of class $H^s$.

In this case, a horizontal curve $t\mapsto \varphi(t)$ is just the flow of a time-dependent horizontal vector field of Sobolev class $H^s$.

It is important to note that this relative tangent space is not smooth, as it is simply continuous with respect to $\varphi$. However, it is possible to find an equivalent relative tangent space (i.e., such that horizontal curves are the same) that is smooth. See \cite{AT} and the last section of this paper for more details, and \cite{ATY,ATY2,AMY} for applications of such structures to shape analysis.
\end{example}

\begin{defi}
The \textbf{orbit} $\mathcal{O}_{q_0}$ of a point $q_0$ in manifold $M$ endowed with a relative tangent space is the set of all points $q$ of $M$ that can be connected to $q_0$ by a horizontal curve. 

The structure is said to be \textbf{approximately controllable} from $q_0$ if $\mathcal{O}_{q_0}$ is dense in $M$. It is said to be \textbf{controllable} (or to have the \textbf{exact controllability} property) if $\mathcal{O}_{q_0}=M$ for some $q_0$.
\end{defi}
Note that $q_0\in \mathcal{O}_{q_0}$, and that if $q\in\mathcal{O}_{q_0}$, then $\mathcal{O}_{q_0}=\mathcal{O}_{q}$. In finite dimensions, the well-known Chow-Rashevski theorem provides easily checkable sufficient condition for controllability.
\begin{theo}[Chow-Rashevski, \cite{MBOOK}]\label{th:c-r}
Let $M$ be a connected finite dimensional manifold with a smooth relative tangent space. Also assume that for any $(q,v)\in TM$, there exists horizontal vector fields $X_1,\dots,X_r$ such that
$
v=[\dots[X_1,X_2],X_3],\dots,X_r](q),
$
with $[\cdot,\cdot]$ the usual Lie bracket on smooth vector fields of $M$. Then any two points of $M$ can be connected by a horizontal curve.
\end{theo}
This theorem was improved by Sussmann's orbit theorem, see \cite{SUSS}. We will give more details in Section \ref{sec:cont}.
%
%\begin{example} Let $G$ be a Banach Lie group modeled on its Lie algebra $\mathfrak{g}$ \cite{KMBOOK}. Take a smooth group action of class $\mathcal{C}^k$ of $G$ on a Banach manifold $M$. Then, the infinitesimal action of $\mathfrak{g}$ on $M$ is smooth and given by
%$$
%\xi_qv=\frac{d}{dt}(exp(tv)\cdot q)_{\vert t=0},
%$$
%where $exp$ is the right exponential map. Now let $V$ be a subspace of $\mathfrak{g}$ equipped with a Banach vector space topology such that the inclusion map $V\rightarrow \mathfrak{g}$ is continuous. Define on $V$ a continuous positive definite bilinear form $g$.
%
%Then $(M\times V,\xi,g)$ is a weak sub-Riemannian manifold. Note that this relative tangent space may not induce a regular distribution of subspaces in $TM$. Indeed, even in finite dimensions, the rank of $\xi_q(V)\subset T_qM$ may note be constant on each orbit when $V\neq \mathfrak{g}$. 
%
%This is a sub-Riemannian version of Clebsch mecanics (or Clebsch optimal control) \cite{CHH,GR}.
%
%This type of manifolds are often used in computational anatomy and shape analysis \cite{ATY,TY1,YBOOK}, although one often prefers to use groups of diffeomorphisms, such as that in the previous example instead of Banach Lie groups, but the idea is the same. In this case, one often takes $V$ to be a Hilbert subspace of vector fields, but sometimes weak metrics on $V=\mathcal{C}^k_b(\R^d,\R^d)$ are considered, see \cite{BBHM}.
%\end{example}

\subsection{Length, energy and distance}

Let $I\subset \R$ be an interval and $(\h,\xi,g)$ be a weak sub-riemannian structure on a Banach manifold $M$. 

\begin{defi}\label{cost} The \textit{action} and \textit{length} of a horizontal system $(q,u):I\rightarrow\mathcal{H}$ is respectively defined by
$$
A(q,u)=\frac{1}{2}\int_Ig_{q(t)}(u(t),u(t))dt\quad\text{and}\quad \int_I\sqrt{g_{q(t)}(u(t),u(t))}dt.
$$
%The \textit{normal action} and \textit{normal length} of its trajectory, $A(q)$ and $L(q)$, are respectively the infimum of the action and of the length over all controls $t\in I\mapsto u(t)\in \mathcal{H}_{q(t)}$ such that $(q,u)$ is a horizontal system.
\end{defi}

\begin{remark}
Another possibility is to directly define the sub-Riemannian (semi-)norm of a horizontal vector $X=\xi_q(u)\in T_qM$: the linear map $\xi_q$ defines on its image $\xi_q(\mathcal{H}_q)$ a seminorm $n:\xi_q(\mathcal{H}_q)\rightarrow \R^+$ by
$$
n_q(w)^2=\inf_{u\in \h_q,\ \xi_qu=w}g_q(u,u).
$$
If $q:I\rightarrow M$ is a horizontal curve, its normal length and action can be respectively defined by
$$
L(q)=\int_I n_{q(t)}(\dot{q}(t))dt,\quad\text{and}\quad A(q)=\frac{1}{2}\int_In_{q(t)}(\dot{q}(t))^2dt.
$$
However, the normal action and length may not be smooth when $\xi$, so the action of horizontal systems as given in Definition \ref{cost} is better suited to the study of geodesics.
\end{remark}

%\begin{remark}Let $q:I\rightarrow M$ be a horizontal curve for a sub-Riemannian structure on $M$ with finite normal action $A(q)$. Assume that for all $t\in I$, we have an orthogonal decomposition $\h_{q(t)}=\ker(\xi_{q(t)})\oplus
%\ker(\xi_{q(t)})^\perp$. This is always the case for strong structures.
%
%Then, $u(t)=\xi_{q(t)\vert \ker(\xi_q)^\perp}^{-1}\dot{q}(t)$ is the unique (up to a zero-mesure subset) measurable control $u:[0,1]\rightarrow \h$ such that 
%$$
%A(q)=A(q,u).
%$$
%In other words, each horizontal curve possesses a unique action-minimizing control.
%\end{remark}

\begin{defi} The sub-Riemannian distance $d(q_0,q_1)$ between two points $q_0, q_1\in M$ is defined by
$$
d(q_0,q_1)=\inf_{\substack{(q,u)\in L^2(0,1;\h),\\ (q,u)\ \mathrm{horizontal},\\
q(0)=q_0,\ q(1)=q_1}} L(q,u).
$$
\end{defi}
Just like in the Riemannian case, this is a semi-distance, though it may have infinite values when there is no horizontal curve between $q_0$ and $q_1$. Note that this definition implies the following characterization of the orbit of a point $q_0$ in $M$:
$$
\mathcal{O}_{q_0}=\{q\in M\mid d(q_0,q)<+\infty\}.
$$
\begin{lem}\label{dist}
The map $d:M\times M\rightarrow\R\cup\{+\infty\}$ is a semidistance, possibly with infinite values. 

\noindent
When the sub-Riemannian structure is strong, $d$ is always a true distance, that is, $d(q_0,q_1)=0$ if and only if $q_0=q_1$. Moreover, the topology it induces on $M$ is finer than the intrinsic manifold topology.
\end{lem}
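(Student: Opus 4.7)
The plan is to verify the semi-distance axioms directly, and then in the strong case use a local chart estimate to promote $d$ to a genuine distance and compare topologies.

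First I would check that $d$ is a semi-distance. Reflexivity follows from the constant horizontal system $(q(t)\equiv q_0,u\equiv 0)$. Symmetry comes from time-reversal: if $(q(t),u(t))$ is horizontal on $[0,1]$, so is $(q(1-t),-u(1-t))$, with the same length. For the triangle inequality, given $\varepsilon>0$ I would pick near-minimizing horizontal systems joining $q_0\to q_1$ and $q_1\to q_2$, concatenate them (the join stays in $H^1\times L^2$ since the trajectories agree at the midpoint) and reparametrize back to $[0,1]$; since length is invariant under reparametrization, this yields a horizontal curve of length at most $d(q_0,q_1)+d(q_1,q_2)+\varepsilon$.

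For the strong case I would fix $q_0\in M$ and work in a chart $U$ around $q_0$ in which both $\h|_U$ and $TM|_U$ trivialize as $U\times\mathrm{H}$ and $U\times\mathrm{B}$ respectively. In these trivializations $\xi$ is a smooth, hence locally bounded, map $U\to L(\mathrm{H},\mathrm{B})$, and $g$ is a smooth map from $U$ into the Banach space of bounded symmetric bilinear forms on $\mathrm{H}$. Strongness at $q_0$ says $g_{q_0}$ is equivalent to the reference norm on $\mathrm{H}$; by continuity of $q\mapsto g_q$, this equivalence persists with uniform constants on a small ball $V=\{q:\|q-q_0\|_{\mathrm{B}}<\delta\}$, yielding constants $c,K>0$ with
\begin{equation*}
\|\xi_q u\|_{\mathrm{B}}\leq K\|u\|_{\mathrm{H}} \quad\text{and}\quad c\|u\|_{\mathrm{H}}\leq\sqrt{g_q(u,u)},\qquad \forall\,q\in V,\ u\in\mathrm{H}.
\end{equation*}
Any horizontal system $(q,u)$ starting at $q_0$ and remaining in $V$ then satisfies $\|q(t)-q_0\|_{\mathrm{B}}\leq (K/c)L(q,u)$ for every $t\in[0,1]$, and a routine continuity bootstrap shows that if $L(q,u)<c\delta/K$ the curve cannot exit $V$ at all. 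Taking the infimum over horizontal systems joining $q_0$ to $q_1$ gives $\|q_1-q_0\|_{\mathrm{B}}\leq (K/c)\,d(q_0,q_1)$ whenever $d(q_0,q_1)$ is small enough; this simultaneously yields $d(q_0,q_1)=0\Rightarrow q_1=q_0$ and shows that each small $d$-ball around $q_0$ is contained in an $M$-ball, so the induced topology refines the intrinsic manifold topology.

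The only delicate step is the uniform local equivalence of $g_q$ with the reference norm on $\mathrm{H}$ in a neighborhood of $q_0$, which is where strongness is genuinely used: in a weak metric the constants $c,K$ may degenerate, and this is precisely what allows $d(q_0,q_1)=0$ with $q_0\neq q_1$. The continuity bootstrap confining $q$ to $V$ is standard but should be written out explicitly to make the argument airtight.
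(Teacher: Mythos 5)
Your proof is correct and follows essentially the same route as the paper: verify the semi-distance axioms via reversal and concatenation of horizontal systems, then in the strong case use a local trivialization to get a uniform bound $\Vert\xi_q u\Vert_{\mathrm{B}}\leq C\sqrt{g_q(u,u)}$ near $q_0$, which forces $d(q_0,q_1)\gtrsim\Vert q_1-q_0\Vert_{\mathrm{B}}$ locally and yields both positivity and the topology comparison. The paper handles the confinement issue with an explicit exit-time argument rather than your continuity bootstrap, but these are the same idea.
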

\begin{proof}
That $d$ is a semi-distance comes from the basic properties of horizontal systems. Indeed, reversals and concatenation of horizontal systems are also horizontal systems, so the symmetric property and triangular inequality are trivial.

Now assume the metric $g$ is strong and let $q_0\in M$. We work in a coordinate neighbourhood $U$ centered at $q$ such that $\h_{\vert U}\simeq U\times \mathrm{H}$, that can be identified with a small open ball $B_0$ of radius $\varepsilon>0$ of the Banach space $\mathrm{B}$ on which $M$ is modeled. In this chart, since $\xi$ is a smooth vector bundle morphism and $q\mapsto g_q$ is a smooth family of Hilbert norms on $\mathrm{H}$, there exists $c>0$ such that
$$
\forall (q,u)\in B_0\times\mathrm{H},\quad \Vert\xi_q(u)\Vert_{\mathrm{B}}\leq c\sqrt{g_q(u,u)}.
$$
Now, for a horizontal system $(q,u):I=[a,b]\rightarrow\h$ starting at $q(a)=q_0$, let 
$$
t_e=\inf\{t\in[a,b]\mid \Vert q(t)-q_0\Vert_{\mathrm{B}}\in M\setminus B_0\}.
$$
Then $t_e>a$ and
$$
\varepsilon\leq \Vert q(t_e)-q_0\Vert_{\mathrm{B}}=\Vert q(t_e)-q(a)\Vert_{\mathrm{B}}\leq \int_a^{t_e}\Vert\xi_{q(t)}(u(t))\Vert_{\mathrm{B}}dt
\leq c\int_a^{t_e} \sqrt{g_{q(t)}(u(t),u(t))}dt.
$$
In particular, the sub-Riemannian distance between $q_0$ and the sphere $\{\Vert q-q_0\Vert_{\mathrm{B}}=\varepsilon\}$ is no less than $c\varepsilon>0$. Therefore, the triangle inequality shows that if $q_1$ in $M$ does not belong to the ball $B_0$, then $d(q_0,q_1)\geq c\varepsilon>0$. On the other hand, if $q_1\neq q_0$  does belong to $B_0$, then any horizontal curve with endpoints $q_0$ and $q_1$ has length greater than $c\Vert q-q_0\Vert_{\mathrm{B}}$, so $$d(q_0,q_1)\geq c\Vert q-q_0\Vert_{\mathrm{B}}>0.$$

In the end, $q_1\neq q_0$ implies that the sub-Riemannian distance between $q_0$ and $q_1$ is positive, hence the sub-Riemannian distance is a true distance for strong sub-Riemannian manifolds.
\end{proof}

\subsection{Definitions}
Fix a sub-Riemannian Banach manifold $(M,\h,\xi,g)$ modelled on a Banach space $\mathrm{B}$.
\begin{defi}
A \textbf{local geodesic} is a horizontal system $(q,u):I\rightarrow M$ such that, for every $t_0 \in I$, and for every $t_1>t_0$ with $t_1-t_0$ small enough, there is an open neighbourhood $U$ of $q([t_0,t_1])$ such that  any horizontal system $(q',u'):I'\rightarrow U$ with endpoints $q(t_0)$ and $q(t_1)$ satisfies
$$L((q,u)_{\vert[t_0,t_1]})\leq L(q',u').$$
It is a \textbf{geodesic} if we simply have, for $t_0$ and $t_1$ close enough,
$$
L((q,u)_{\vert[t_0,t_1]})=d(q(t_0),q(t_1)),
$$
and a \textbf{minimizing geodesic} if its total length is equal to the distance between its endpoints.

We will also use the same term to describe the trajectory $q(\cdot)$ of such control system.
\end{defi}
This distinction between local geodesic and plain geodesics is necessary for weak structures even in infinite dimensional Riemannian manifolds (see \cite{KMBOOK} for example). However, when the metric is strong, all local geodesics are actually geodesics. This is a trivial consequence of the proof of Lemma \ref{dist}.
\begin{remark}
If a horizontal system $(q,u)$ minimizes the action $A(q,u)$ among controls whose trajectories have the same endpoints, then the trajectory $q$ is a minimizing geodesic. This is a trivial consequence of the Cauchy-Schwartz inequality.
% However, the converse is only true when $u$ is the minimizing control associated to $q$.
\end{remark}

The existence of minimizing geodesics between two points is a difficult question in infinite dimensions. For example, even for strong Riemannian Hilbert manifolds, metric completeness does not imply geodesic completeness.

\begin{example} Consider $X=l^2(\N)$ the space of square-summable sequences, and let $M$ be the ellipsoid given by
$$
\left\{(x_n)\in l^2,\ \sum_{n=0}^\infty \frac{x_n^2}{(1+\frac{1}{n+1})^2}=1\right\}
$$
equipped $M$ with the Riemannian metric inherited from the ambient space. $M$ will be complete for the Riemannian distance but there will be no minimizing geodesic between $(\sqrt{2},0,\dots)$ and $(-\sqrt{2},0,\dots)$.
\end{example}

\section{Exact and approximate controllability}\label{sec:cont}

The first problem when considering sub-Riemannian geometry is that of controllability: can we get from any starting point $q_0$ to any target point $q_1$ of $M$ through horizontal curves. In finite dimensions, the Chow-Rashevsky theorem \cite{MBOOK} (and its more general version, Sussmann's orbit theorem \cite{SUSS,CTBOOK}) gives a nice sufficient condition for the controllability of the structure: it is controllable when the iterated Lie brackets of horizontal vector fields span the entire tangent space. Moreover, the ball-box theorem also gives precise estimates on the sub-Riemannian distance in this case, showing that it is topologically equivalent to the intrinsic manifold topology of $M$. 

We will see that these conditions are unreasonnable to expect in the case of infinite dimensional manifolds. All we can usually expect to have dense orbits, that is, approximate controllability. We will however give some natural, stronger conditions that do ensure exact controllability.

For the rest of this section, unless stated otherwise, $M$ is a sub-Riemannian Banach manifold endowed with a sub-RIemannian structure $(\mathcal{H},\xi,g)$.

\subsection{Finite dimensions: the Chow-Rashevski Theorem}

We identify the horizontal distribution $\Delta$ with $\Gamma(TM)$, the $\mathcal{C}^\infty(M)$-module of all horizontal vector fields on $M$ of class $\mathcal{C}^\infty$. By induction, we define the nondecreasing sequence of $\mathcal{C}^\infty(M)$-modules $(\Delta_i)_{i\in\N}$ by
$$
\Delta^0=\{0\},\quad\Delta^1=\Delta,\quad \Delta^{i+1}=\Delta^i+[\Delta,\Delta^i],
$$
where $[\cdot,\cdot]$ denotes the Lie bracket for smooth vector fields on $M$. Then $\mathcal{L}=\cup_{i\in\N} \Delta^i$ is the \textit{Lie algebra} of vector fields generated by $\Delta$. 

\begin{remark}
Note that any $\mathcal{C}^\infty(M)$-module $\mathcal{E}$ of vector fields can be identified to a subset $\mathcal{E}'$ of $TM$ given by 
$$ \mathcal{E}'=\bigcup_{q\in M}\mathcal{E}_q\subset TM,\quad
\mathcal{E}_q=\lbrace X(q)\mid X\in \mathcal{E}\rbrace\subset T_qM.
$$
\end{remark}

\begin{remark}
This definition is valid for both finite and infinite dimensional manifolds.
\end{remark}

\begin{defi}
We say that the sub-Riemannian structure satisfies the \textit{Chow-Rashevski property} at $q\in M$ when $\mathcal{L}_q=T_qM$.
\end{defi}

For the rest of the section, we assume that $M$ is finite dimensional. We can now re-state the Theorem \ref{th:c-r} as follows.

\begin{theo}[\textbf{Chow-Rashevski theorem in finite dimensions} \cite{CTBOOK,SRBOOK,MBOOK}]\label{th:CR}
Assume that the finite dimensional sub-Riemannian manifold $M$ satisfies the Chow-Rasevski property at every point $q\in M$. Then that structure is exact controllable. 
\end{theo}

\begin{remark}
A more precise result is given by Sussmann's Orbit Theorem \cite{SUSS}. It states that each orbit $\mathcal{O}_q$ is an immersed submanifold such that $\mathcal{L}_q\subset T_q\mathcal{O}_q$. Sussmann also proved that, if the structure is analytic, we actually have $\mathcal{L}_q= T_q\mathcal{O}_q$.
\end{remark}

The proof of Chow-Rashevski's theorem can be refined to give local estimates on the sub-Riemannian distance. Indeed, fix $q_0\in M$,  assume $T_{q_0}M=\mathcal{L}_{q_0}=\Delta^k_{q_0}\neq \Delta^{k-1}_{q_0}$, and let $r_i=\dim(\Delta^{i}_{q_0})-\dim(\Delta_{q_0}^{i-1})$ for $i=1,\dots,k$ so that $r_1+\dots+r_k=\dim(M)$.

\begin{theo}[\textbf{Ball-box theorem in finite dimensions} \cite{SRBOOK,MBOOK}]\label{th:bb}
Around such a $q_0\in M$, there are coordinates $q=(x_1,\dots,x_k)$, with $x_k\in \R^{r_i}$ around $q_0$ such that for some $C>0$ such that
$$
\frac{1}{C}\sum_{i=1}^{k}\vert x_i\vert^{2/i}\leq d(q_0,q)^2\leq {C}\sum_{i=1}^{k}\vert x_i\vert^{2/i}.
$$
\end{theo}
In particular, the topology induced by the sub-Riemannian distance coincides with the intrinsic manifold topology of $M$.

%
%
%Let $M$ be a \textit{connected} Banach manifold. For a subset $K$ of $M$, we define its \textit{kinetic tangent cone} $T_qK$ at $q\in K$ as the set of all velocities $\dot{q}(0)$, where $q:[0,1]\rightarrow K$ is a curve of class $\mathcal{C}^1$. For the rest of this section, fix $(\h,\xi,g)$ a sub-Riemannian structure on $M$. 
%
%
%
%
%\begin{defi} The orbit $\mathcal{O}_q$ of a point $q$ in $M$ is the set of points that can be connected to $q$ through a horizontal curve. Equivalently, it is the set of all points $q'\in M$ such that $d(q,q')$ is finite.
%
%We say that the structure has the property of \textit{approximate controllability} (resp. \textit{exact controllability}) if each orbit is dense in $M$ (respectively equal to $M$).
%\end{defi}
%
%\begin{remark} Belonging to the same orbit is an equivalence relation. In particular, $\mathcal{O}(q)=\mathcal{O}(q')$ whenever $q'\in \mathcal{O}(q)$.
%\end{remark} 
%
%Our main goal in this section will be to give sufficient conditions for approximate and exact controllability, and, when possible, to compare the topology induced by $d$ with the intrinsic topology on $M$. We will assume throughout this section that $\xi$ is smooth, that is, of class $\mathcal{C}^\infty$.
%

The exact statement of the Chow-Rashevski and ball-box theorems are both open problems in infinite dimensional manifolds. Moreover, even if such a result existed, it would not be as useful: it is very rare to have $\mathcal{L}_q=T_qM$, simply because $\mathcal{L}_q$ is usually dense, but almost never closed. This is expected, intuitively, because the Lie algebra generated by $\Delta$ is constructed in an algebraic way as an indefinitely increasing union of brackets of horizontal vector fields. Let us give an in-depth example, which will also be useful for seeing what happens when studying geodesics, in the next section.

\subsection{An Example: the $\ell^2$-product of Heisenberg Groups}
\label{sec:heis}
We take an in-depth look at the problem of controllability in a very simple example of infinite dimensional sub-Riemannian manifold, the $\ell^2$-product of Heisenberg groups.

\subsubsection{The 3-Dimensional Heisenberg Group} 

The Heisenberg group is the simplest case of finite dimensional sub-Riemannian manifold. The manifold itself is $\mathbb{H}=\R^3$, and the horizontal space at $q=(x,y,z)$ is spanned by
$$
X(q)=(1,0,-\frac{x}{2})=\frac{\partial}{\partial x}-\frac{y}{2}\frac{\partial}{\partial z},\quad Y(q)=(0,1,\frac{x}{2})=\frac{\partial}{\partial y}+\frac{x}{2}\frac{\partial}{\partial z},
$$
which are orthonormal for the metric. Horizontal curves $q(\cdot)=(x(\cdot),y(\cdot),z(\cdot))$ therefore satisfy
$$
\dot{x}(t)=u(t),\quad \dot{y}(t)=v(t),\quad \dot{z}(t)=\frac{1}{2}(v(t)x(t)-u(t)y(t)),\quad u,\,v\in L^2(I;\R),
$$
with action
$$
A^\mathbb{H}(q(\cdot))=\frac{1}{2}\int_I (u(t)^2+v(t)^2)dt.
$$
Since $[X,Y](q)=\frac{\partial}{\partial z}$, so that $X,Y,[X,Y]$ span the tangent bundle, any two points can be connected by a horizontal curve, and the sub-Riemannian distance satisfies the ball-box estimates of Theorem \ref{th:bb}
$$
\frac{1}{C}(x^2+y^2+\vert z\vert)\leq d^\mathbb{H}(0,(x,y,z))\leq C (x^2+y^2+\vert z\vert)
$$
for some fixed $C>0$.
\vspace{2mm}
%
%To get the geodesics, one computes the Hamiltonian $H:T^*\mathbb{H}\simeq\R^3\times\R^3\rightarrow\R$ of the system, given by
%$$
%H(q,p)=H(x,y,z,p_x,p_y,p_z)=\fr (p(X)^2+p(Y)^2)=\fr ((p_x-\fr yp_z)^2+(p_y+\fr xp_z)^2).
%$$
%Then, geodesics $t\mapsto q(t)$ are precisely projections to $\mathbb{H}$ of solutions $t\mapsto (q(t),p(t))$ of the Hamiltonian equations
%$$
%\begin{aligned}
%\dot{q}(t)&=\partial_pH(q(t),p(t)),\\
%\dot{p}(t)&=-\partial_qH(q(t),p(t)),
%\end{aligned}
%$$
%that is,
%$$\left\lbrace
%\begin{aligned}
%\dot{x}(t)&=p_x(t)-\fr y(t)p_z(t),\\
%\dot{y}(t)&=p_y(t)+\fr x(t)p_z(t),\\
%\dot{z}(t)&=\fr(x(t)\dot{y}(t)-\dot{x}(t)y(t)),
%\end{aligned}\right.
%\qquad
%\left\lbrace\begin{aligned}
%\dot{p_x}(t)&=-\fr p_z(t)\left(p_y(t)+\fr x(t)p_z(t)\right),\\
%\dot{p_y}(t)&=-\fr p_z(t)\left(p_x(t)-\fr y(t)p_z(t)\right),\\
%\dot{p_z}(t)&=0.\\
%\end{aligned}\right.
%$$
%Solving these equations, one gets that geodesics from $0$ are exactly curves of the form
%$$
%\left\lbrace\begin{aligned}
%{x}(t)&=a\sin(\lambda t)+b(1-\cos(\lambda t)),\\
%y(t)&=a(1-\cos(\lambda t))+b\sin(\lambda t),\\
%z(t)&=\frac{a^2+b^2}{\lambda}\left(1-\frac{\sin(\lambda t)}{\lambda}\right),
%\end{aligned}\right.
%$$ 
%with $\lambda=p_z(0)$, $a=p_x(0)/p_z(0)$ and $b=p_y(0)/p_z(0)$, extended by continuity as $\lambda$ goes to 0 for fixed $t,p_x(0),p_y(0)$. Any such curve is minimizing on intervals of length at most $2\pi$, except for when $\lambda=0$, in which case they are minimizing $\R$.

\subsubsection{The $\ell^2$-product of Heisenberg groups} \label{sec:proheiscont}

We now consider the Hilbert manifold $M=\ell^2(\N,\R^3)$ the space of square-summable sequences $q=(q_n)_{n\in\N}=(x_n,y_n,z_n)_{n\in\N}$ of $\R^3$. We define on it the sub-Riemannian structure generated as $q$ by the Hilbert frame
$$
X_n(q)=\frac{\partial}{\partial x_n}-\frac{y_n}{2}\frac{\partial}{\partial z_n},\quad Y_n(q)=\frac{\partial}{\partial y_n}+\frac{x_n}{2}\frac{\partial}{\partial z_n}.
$$

\paragraph{Lie Algebra.} We denote by $\mathcal{L}$ the Lie algebra of smooth vector fields generated by horizontal vector fields. Now, we have $Z_n:=[X_n,Y_n]=\frac{\partial}{\partial z_n}$, so that the horizontal vector fields give a Hilbert-spanning frame of $TM$. In other words, any tangent vector can be written as an \textit{infinite} linear combination with $\ell^2$ coefficients of brackets of horizontal vector fields. However, they do not span it as a vector field.

For example take the two horizontal vector fields
$$
X=\sum_{n\in\N} a_{n}X_n,\quad Y=\sum_{n\in\N}b_{n}Y_n,\quad \sum_{n\in\N}a_n^2+b_n^2<+\infty.
$$
Then
$$
[X_1,X_2]=\fr \sum_{n\in\N} a_nb_n\frac{\partial}{\partial z_n}.
$$
But, as a product of $\ell^2$-sequences, $(a_nb_n)_{n\in\N}$ actually belongs to the dense subspace of absolutely summable sequences $\ell^1(\N,\R)\subset \ell^2(\N,\R)$. 
\vspace{2mm}

More generally, one easily checks that any tangent vector $v\in T_qM$ at $0$ belongs to the $\mathcal{L}$ if and only if it can be written
$$
v=\sum_{n\in \N} a_nX_n(q)+b_nY_n(q)+c_nZ_n(q), \quad \sum_{n\in\N} a_n^2+b_n^2<+\infty,\quad \sum_{n\in\N} \vert c_n\vert\leq +\infty.
$$
Therefore, $\mathcal{L}$ is only \textit{dense} in $M$.
%\begin{example}\label{heis}
%Consider $M=l^2(\N,\R^3)$, the set of square-summable sequences of real triplets $q=(x_n,y_n,z_n)_{n\in\N}$, and let $H=l^2(\N,\R^2)$ the set of square-summable sequences of real couples $u=(u_n,v_n)_{n\in\N}$. Define the sub-Riemannian structure $(\h,\xi,g)$ by $\h=M\times H$, $g(u,u)=\sum u_n^2+v_n^2$ and 
%$$
%(\xi_{q}{u})_n=u_n\frac{\partial}{\partial x_n}+v_n\frac{\partial}{\partial y_n}+\frac{1}{2}(v_nx_n-u_ny_n)\frac{\partial}{\partial z_n}.
%$$
%This is a strong structure that corresponds to an $\ell^2$-infinite product of the Heisenberg group $\mathbb{H}$ \cite[Chapter 1]{MBOOK}: any triplet $(x_n,y_n,z_n)$ is just an element of $\mathbb{H}$ independent of the other triplets of the sequence. The horizontal distribution $\Delta=\xi(\h)$ is generated by the orthonormal Hilbert frame $(X_0, Y_0,\dots)$, with
%$$
%X_n=\frac{\partial}{\partial x_n}-\frac{1}{2}y_n\frac{\partial}{\partial z_n},\quad
%Y_n=\frac{\partial}{\partial y_n}+\frac{1}{2}x_n\frac{\partial}{\partial z_n}.
%$$
%Now, $Z_n=[X_n,Y_n]=\frac{\partial}{\partial z_n}$, so that $\Delta^2_q$ is dense in $T_qM$ for every $q\in M$: We have approximate controllability.

\paragraph{Orbit of $0$.} Let us describe the orbit of $0$ in $M$. A curve $t\mapsto q(t)=(q_n(t))_{n\in\N}$ is horizontal if and only if each curve $t\mapsto q_n(t)=(x_n(t),y_n(t),z_n(t))\in \R^3\simeq \mathbb{H}$ is horizontal for the 3-dimensional Heisenberg group. Moereover, its action is given by
$$
A(q(\cdot))=\sum_{n\in\N} A(q_n(\cdot)).
$$
Consequently, the sub-Riemannian distance between $0$ and $q=(x_n,y_n,z_n)_{n\in\N}$ is given by
$$
d(q,q')^2=\sum_{n=0}^{+\infty} d_{\mathbb{H}}\big((x_n,y_n,z_n),(x'_n,y'_n,z'_n)\big)^2,
$$
with $d_{\mathbb{H}}$ denoting the sub-Riemannian distance on the Heisenberg group $\mathbb{H}$ as described in the previous section. But we know from Theorem \ref{th:CR} that
$$
\exists\, C>0,\ \forall (x,y,z)\in \mathbb{H},\quad \frac{1}{C}(x^2+y^2+\vert z\vert)\leq d_{\mathbb{H}}(0_{\mathbb{H}},(x,y,z))^2\leq C(x^2+y^2+\vert z\vert).$$
In particular $d(0,q)$ is finite if and only if $(x_n,y_n)_{n}\in l^2(\N,\R^2)$ while $(z_n)_n\in l^1(\N,\R)$. In other words, 
$$ 
\mathcal{O}_{0}=l^2(\N, \R^2)\times l^1(\N, \R)\subset l^2(\N, \R^3).
$$
Moreover, the topology on $\mathcal{O}_{0}$ induced by the sub-Riemannian distance actually coincides with the usual Banach space topology of $l^2(\N, \R^2)\times l^1(\N, \R)$, and we get a dense orbit in $M$. We also lost the Hilbert topology.
%\begin{remark}
%If we restrict the structure to $\mathcal{O}_{0}=l^2(\N, \R^2)\times l^1(\N, \R)$, we see that the Chow-Rashevski condition \textit{is} indeed satisfied, and we do have exact controllability. However, we will see that in this case, a stronger condition is actually obtained, which we call the \textit{strong} Chow-Rashevski condition. See Section \ref{sec:crinf}.
%\end{remark}

%Note that, in particular, the "appropriate" dual of $T_{0}\mathcal{O}_{{\bf 0}}$ is $l^2(\N, \R^2)\times l^\infty(\N, \R)$, which is much bigger than the dual of $M$. This will be of importance when considering the geodesic equations.
%\end{example}
We can actually get even worse, even in the simple case of $\ell^2$ products of Carnot groups.
\subsubsection{A Non-Locally Convex Topology}\label{sec:nonconv}
Slightly complicating our example slightly, if we take $M=l^2(\N,\R^4)$ as an infinite product of the Engel group $\mathbb{E}$ \cite{MBOOK} (or any step-3 or higher Carnot group), we start getting even less satisfactory topologies. Indeed, we will once more get $\mathcal{O}_{0}=l^2(\N,\mathbb{E})$. But on the Engel group, 
$$
\frac{1}{C}(x^2+y^2+\vert z\vert+\vert w\vert^{2/3})\leq d_{\mathbb{H}}(0_{\mathbb{H}},(x,y,z,w))^2\leq C(x^2+y^2+\vert z\vert+\vert w\vert^{2/3}).
$$
In other words, the sub-Riemannian distance on $\mathcal{O}_{{ 0}}$ is equivalent to the usual quasi-distance on $l^2(\N, \R^2)\times l^1(\N, \R)\times l^{2/3}(\N, \R)$, whose topology is \textit{not locally convex}.

\subsection{Approximate Controllability}
As we just saw, the conditions for Chow-Rashevski's theorem are very rarely satisfied. However, it is much more common for $\mathcal{L}_q$ to be \textit{dense} in $T_qM$. In this case, as was proved in \cite{DS,HX,KM}, we do have approximate controllability.

\begin{theo}[\cite{DS,HX,KM}]
Assume that $M$ is a Banach manifold, and that $\mathcal{L}_q$ is dense in $T_qM$ for every $q$ in $M$. Then each orbit $\mathcal{O}_q$ is dense in $M$, so that the structure is  approximate controllable.
\end{theo}
\begin{remark}
This is actually true even for so-called convenient manifolds (i.e., manifolds modelled on convenient vector spaces, see \cite{KMBOOK}), as shown in \cite{KM}.
\end{remark}
The proof uses the fact that for a proper closed subset $F$ of a Banach space $B$, there is a cone $C=\lbrace q_0+tq,\ q\in K, 0\leq t\leq 1\rbrace$ with nonempty interior, vertex $q_0\in F$ and such that $C\cap F=\lbrace q_0\rbrace$. This in turn implies that there are no $\mathcal{C}^1$-curves in $F$ starting at $q_0$ with initial velocity in the interior of $C$. But taking $F$ to be the closure of an orbit, one can easily build a curve from $q_0$ with initial velocity given by any $v\in \mathcal{L}_{q_0}$. Hence, if $F\neq M$, we get that $\mathcal{L}_{q_0}$ is not dense in $B$.

\subsection{Exact controllability and strong Chow-Rashevski property}\label{sec:crinf}

The question of exact controllability is much more complex, even in simple casesas we saw in Section \ref{sec:heis}, and we do not often have exact controllability.

However, under a stronger hypothesis, and by working a little harder, we can still obtain it.
\subsubsection{The Strong Chow-Rashevski Property}
\begin{defi}
The sub-Riemannian structure is said to satisfy the \textbf{strong Chow-Rashevski property} at $q\in M$ if there exists fixed horizontal vector fields $X_1,\dots,X_r\in \Delta$ and a fixed positive integer $k$ such that
\begin{equation}\label{stcr}
T_qM=\Delta_q+\sum_{i\in\{1,\dots,r\}} [\Delta,X_i]_q+\dots+\sum_{I\in\{1,\dots,r\}^k}[\Delta,X_I]_q,
\end{equation}
where, for simplicity, we denoted
$$
X_I=[X_{i_j},[\dots,[X_{i_2},X_{i_1}]\dots],\quad I=(i_1,\dots,i_j)\in \{1,\dots,r\}^j,\quad j\in\{1,\dots,k\}.
$$
\end{defi}
In this case, we can adapt the proof of the finite dimentional Chow-Rashevski theorem to the infinite dimensional context.
\paragraph{Examples.} Before we state our result, let us give a few examples of infinite dimensional sub-Riemannian manifolds that satisfy this property.
\begin{example}
If the horizontal distribution has finite condimension everywhere, then the Chow-Rashevski condition and the strong Chow-Rashevski condition are equivalent. This is the case for an infinite dimensional Heisenberg group $\mathbb{H}^\infty=\ell^2(\N,\R^2)\times\R$, with horizontal vector fields spanned by
$$
X_n(q)=X_n(x_n,y_n,z)=\frac{\partial}{\partial x_n}-\fr y_n\frac{\partial}{\partial z},\quad Y_n(q)=Y_n(x_n,y_n,z)=\frac{\partial}{\partial y_n}+\fr x_n\frac{\partial}{\partial z},\quad n\in\N.
$$
Here $T_qM=\Delta_q+[\Delta_q,X_n]=\Delta_q+[\Delta_q,Y_n]$ for any integer $n$.
\end{example}

\begin{example}
Consider $M=\R\times\ell^2(\N,\R^2)$, with Hilbert basis of horizontal vector fields given by
$$
X(q)=X(x,y_n,z_n)=\frac{\partial}{\partial x},\quad Y_n(q)=\frac{\partial}{\partial y_n}+x\frac{\partial}{\partial z_n}.
$$
We have $T_qM=\Delta_q+[\Delta_q,X]$. Indeed, at $q=0$ for example, any tangent vector can be written 
$$
v=a\frac{\partial}{\partial x}(0)+\sum_{n\in\N}b_n\frac{\partial}{\partial y_n}(0)+\sum_{n\in\N}c_n\frac{\partial}{\partial z_n}(0),
$$
for some $a\in \R,\ b,c\in \ell^2(\N,\R).$ Then, letting $Y=aX+\sum_{n\in\N}b_nY_n$ and $Y'=\sum_{n\in\N}c_nY_n$, we have
$$
v=Y(0)+[Y',X](0).
$$
\end{example}

\begin{example}
It was proved in \cite{AC} that the sub-Riemannian structure on the group of diffeomorphisms $\mathcal{D}^s(N)$ of a compact $d$-dimensional sub-Riemannian manifold $N$ defined in Example \ref{ex:diff} has exact controllability. It was proved in \cite{AT} that, after some work, this structure also satisfies the strong Chow-Rashevski condition, and estimates on the corresponding sub-Riemannian distance were given.
\end{example}

\subsubsection{Statement and Proof of the Theorem}
\begin{theo}
Assume the strong Chow-Rashevski property is satisfied at $q_0\in M$ for some fixed vector fields $X_1,\dots,X_r\in \Delta$, and $k$ the smallest integer such that \eqref{stcr} is satisfied. Then $\mathcal{O}_{q_0}$ contains a neighbourhood of $q_0$, and the topology induced by the sub-Riemmanian distance is coarser than its intrinsic manifold topology.

As a consequence, if the metric is strong, the two topologies coincide.
\end{theo}
\begin{remark}
Since $M$ is connected, an immediate consequence is that if the strong Chow-Rashevski property is satisfied at every point of $M$, then $\mathcal{O}_{q_0}=M$ and we have exact controllability.
\end{remark}
\begin{proof} 
%Multiplying each $X_i$ from the strong Chow-Rashevski condition by a small constant if necessary, we know that there exists a small neighbourhood $V_0$ of $q_0$ such that, if $(t,q)\mapsto\varphi_i(t,q)$ is the flow of $X_i$ on $M$, then for any $t_1,\dots,t_{2n}\in[-1,1]$ and $q\in V_0$
%$$
%\varphi_{i_1}(t_1)\circ\cdots\varphi_{i_{2k}}(t_{2n})(q)
%$$
%is well-defined.

We work on a small neighbourhood $V_0$ of $q_0$, that we identify to an open subset of a Banach space $\mathrm{B}$, and on which we have a trivialization $V_0\times \mathrm{H}\,{\simeq}\, \h_{\vert V_0}$. In this trivialization, for any $u\in H$, we define the smooth vector field $q\mapsto X_u(q)=\xi_q(u)$. We can assume that each $X_i$ is of the form $X_i=X_{u_{0,i}}$ (changing the trivialization if necessary. We also denote $t\mapsto\varphi^t(u):V_0\rightarrow V_0$ the corresponding flow on $V_0$. 

Fix a positive integer $N$, that we will assume to be as big as needed. The Cauchy-Lipshitz theorem with parameters shows that there exists a smaller neighbourhood $V_1\subset V_0$ of $q_0$, and a neighbourhood $U_0$ of $0$ in $\mathrm{H}$, such that for any $u_1,\dots,u_{N}$ in $U_0$, $q$ in $V_1$ and $t_1,\dots,t_{N}$ in $[-1,1]$,
$$
\varphi^{t_1}(u_1)\circ\cdots\circ\varphi^{t_{N}}(u_{N})(q)\in V_0.
$$
Note that this mapping is smooth in all variables $t$, $u$ and $q$, as composition of flows of smooth vector fields that depend smoothly on a parameter.

Multiplying each $X_i$ by an appropriate constant if necessary, we can assume that $X_i=X(u_{0,i}),$ with $u_{0,i}\in U_0$. To simplify notations, for the rest of the proof, we will denote $\varphi_i=\varphi(u_{i,0}),\ i\in \{1,\dots,r\}$, and, for $j\in\{1,\dots, k\}$ and $I=(i_1,\dots,i_j)\in \{1,\dots,r\}^j$, we denote the iterated Lie bracket
$$
X_I=[X_{i_j},[\dots,[X_{i_2},X_{i_1}]\dots].
$$
We then define for each such $I$ and $t\in [0,1]$ the smooth mapping $\varphi_I^t:V_1\rightarrow V_0$ by
$$
\varphi_I^t(q)=\varphi_{i_j}^{t}\circ\cdots\circ\varphi_{i_1}^{t}.
$$
We can now define, for $u\in U_0$ and $t\in[0,1]$,
$$
\phi_I^{t}(u,q):=\varphi^{1}(u)\circ\varphi_{I}^{-t}\circ\varphi^{1}(u)\circ\varphi_I^{t}(q).
$$
Note that $(t,u)\mapsto \phi_I^t(u)$ is smooth, and that its range is included in $\mathcal{O}_{q_0}$ as a concatenation of $2k+2$ horizontal curves. Moreover, the usual formulas for commutators of flows yields
$$
\phi_I^{t}(u,q)=q+t^{i}[X(u),X_I](q)+o(t^{i}u)
$$
as $u,t\rightarrow 0$, for fixed $q$. Therefore, for fixed $q$, the mapping $U_0\mapsto V_0$
\begin{equation}\label{eq:Phi}
\Phi_I(u,q)=\phi_I^{\Vert u\Vert_{\mathrm H}^{1/{i+1}}}\left(\frac{u}{\Vert u\Vert_{\mathrm{H}}^{\frac{i}{i+1}}},q\right)
\end{equation}
is smooth outside of 0. Moreover, since $u\mapsto X(u)$ is linear, $\Phi_I$ is also  $\mathcal{C}^1$ around 0 with first order limited development
$$
\Phi_I(u,q)=q+[X(u),X_I](q)+o(u).
$$
From there, we easily see that the mapping 
$$\begin{aligned}
&\Phi:&U_0\times U_0^r\times\cdots\times U_0^{kr}&\rightarrow& V_0\\
&&{\bf u}=(u_{I_{i}})_{\substack{i=0,\dots,k,\quad \\I_i\in \{1,\dots,r\}^i}}&\mapsto&\left(
\underset{\substack{i=0,\dots,k, \\I_i\in \{1,\dots,r\}^i}}{\bigcirc}\Phi_I(u_{I_i})\right)(q_0)
\end{aligned}
$$
is of class $\mathcal{C}^1$ near 0, with
$$
\Phi({\bf u})=q_0+\sum_{\substack{i=0,\dots,k, \\I_i\in \{1,\dots,r\}^i}}[X( u_{I_i} ),X_I](q_0)+o({\bf u}).
$$
Then $d\Phi(0):H^{1+\dots+r^k}\rightarrow T_{q_0}M$ is onto as a consequence of the strong Chow-Rashevski condition. Hence, its range contains a neighbourhood of $q_0$. But since $\Phi({\bf u})$ is obtained by taking the endpoint of a concatenation of horizontal curves, we see that its range is included in the orbit of $q_0$. Consequently, the orbit of $q_0$ does contain a neighbourhhod of $q_0$.

\vspace{2mm}
Moreover, let $C_1=\max_{i=1,\dots r} (g_{q_0}(u_{i,0},u_{i,0})$, where we recall that $X_i=X(u_{i,0})$, and $C_2$ such that $g_q(u,u)\leq C\Vert u\Vert_{\mathrm H}^2$ on $V_0\times\mathrm{H}$ (reducing $V_0$ if necessary). Finally, let $C=\max(1,C_1,C_2)^2.$ Then we see that any $\Phi_I(u,q)$, $I\in\{1,\dots,r\}^i,\ i=0,\dots,k,$ is obtained by taking the endpoint of $2i+2$ curves of energy less than 
$
C\Vert u\Vert_{\mathrm H}^{2/{i+1}}.
$
Consequently, we get one side of the ball-box estimates:
$$
d(q_0,\Phi({\bf u}))^2\leq C'\sum_{\substack{i=0,\dots,k, \\I_i\in \{1,\dots,r\}^i}} \Vert u_{I_i}\Vert_{\mathrm H}^{2/{i+1}}.
$$
In particular, any sub-Riemannian ball around $q_0$ includes a neighbourhood of $q_0$, so that the topology induced by the distance is coarser than the intrinsic manifold topology.
\end{proof}
\begin{remark}
It would be much preferable to obtain estimates on the distance of the form
$$
d(q_0,\Phi({\bf u}))^2\leq C'\sum_{\substack{i=0,\dots,k, \\I_i\in \{1,\dots,r\}^i}} g_{q_0}(u_{I_i},u_{I_i})^{1/{i+1}}.
$$
This is obviously true in the strong case. However, in the weak case, we would need to replace each instance of $\Vert u\Vert_{\mathrm H}$ by $\sqrt{g_{q_0}(u,u)}$ in the formula \eqref{eq:Phi} for $\Phi_I$. But then the term
$$
\frac{u}{\left(\sqrt{g_{q_0}(u,u)}\right)^{i/i+1}}
$$
may not go to 0 as $u$ goes to zero, which prevents the rest of the proof from working. This version of the ball-box estimates is therefore an open conjecture.
\end{remark}

\begin{remark}
The converse inequality (for the strong case) is still open. The proof in finite dimensions uses the concept of privileged coordinates, which are much harder to find in infinite dimensions.
\end{remark}

\begin{cor}
If the strong Chow-Rashevski condition is satisied at some point $q_0$ of $M$, then the Banach space $\mathrm{B}$ on which $M$ is modelled is a quotient of products of $\mathrm{H}$. In particular, if the metric is strong, then the norm of $\mathrm{B}$ is equivalent to a Hilbert norm. 
\end{cor}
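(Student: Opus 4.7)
The plan is to recycle the map $\Phi$ constructed in the proof of the previous theorem and apply the open mapping theorem. Recall that in that proof we built, from the fixed horizontal vector fields $X_1,\dots,X_r$ and the bracket depth $k$, a $\mathcal{C}^1$ map
$$
\Phi:\mathrm{H}^N\longrightarrow M,\qquad N=1+r+r^2+\cdots+r^k,
$$
defined on a neighbourhood of $0$, with $\Phi(0)=q_0$ and whose differential at $0$
$$
d\Phi(0):\mathrm{H}^N\longrightarrow T_{q_0}M
$$
sends $\mathbf{u}=(u_{I_i})$ to $\sum_{i,I_i}[X(u_{I_i}),X_{I_i}](q_0)$. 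By construction this is a continuous linear map (each $u\mapsto X(u)(q_0)=\xi_{q_0}(u)$ is continuous and linear, and so is $u\mapsto[X(u),X_{I_i}](q_0)$ for any fixed iterated bracket), and the strong Chow-Rashevski hypothesis is precisely the statement that $d\Phi(0)$ is surjective onto $T_{q_0}M\simeq\mathrm{B}$.

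Second, I would invoke the Banach open mapping theorem: any continuous linear surjection between Banach spaces is open, so $\mathrm{B}$ is topologically isomorphic to the Banach quotient $\mathrm{H}^N/\ker d\Phi(0)$. This is exactly the assertion that $\mathrm{B}$ is a quotient of a (finite) product of copies of $\mathrm{H}$, giving the first half of the corollary.

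For the second half, assume the metric is strong, so that each fiber $\mathrm{H}$ carries an equivalent Hilbert structure. Then $\mathrm{H}^N$ is itself a Hilbert space (finite products of Hilbert spaces are Hilbert), and $\ker d\Phi(0)$, as the kernel of a continuous linear map, is a closed subspace. The quotient of a Hilbert space by a closed subspace is again a Hilbert space in its quotient norm. Combining with the previous step, the isomorphism $\mathrm{H}^N/\ker d\Phi(0)\simeq\mathrm{B}$ is a Banach space isomorphism, so the norm of $\mathrm{B}$ is equivalent to a Hilbert norm.

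The main obstacle is really just bookkeeping: one must be careful that $d\Phi(0)$ is genuinely a continuous linear map from the Banach space $\mathrm{H}^N$ (not just a differential in some weaker sense) — this is where one uses the linearity of $\xi_{q_0}$ and the smoothness of the constructions in the proof of the previous theorem. Once that is clear, the open mapping theorem does all the work, and the Hilbert case follows for free from the standard structure of quotients of Hilbert spaces.
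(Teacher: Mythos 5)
Your argument is correct and is exactly the intended one: the paper states this corollary without proof precisely because it follows immediately from the surjectivity of the continuous linear map $d\Phi(0):\mathrm{H}^{1+r+\dots+r^k}\rightarrow T_{q_0}M\simeq\mathrm{B}$ established in the proof of the preceding theorem, combined with the open mapping theorem. The Hilbert case via the orthogonal complement of the closed kernel is also the standard and intended conclusion.
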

The $\ell^2$-product of Heisenberg group restricted to $\ell^2(\N,\R^2)\times\ell^1(\N,\R)$ does not satisfy the strong Chow-Rashevski condition, although it satisfies the plain Chow-Rashevski condition. This indicates that the strong version of the condition may still hold in infinite dimensions. The proof would most likely be very different, a require completely new mathematical tools.

\section{Geodesics and the Hamiltonian Geodesic Flow}

The purpose of this section is to study geodesics on infinite dimensional sub-Riemannian manifolds. We first recall the various possible definitions of geodesics in infinite dimensions. Then, we define a manifold structure on certain subsets of horizontal systems in order to investigate first order conditions for a such a system to prject onto a geodesic, and recall some properties of the canonical weak symplectic form on the cotangent bundle of a manifold. 
%We then briefly recall some of what happens in finite dimensions.  

We then finally investigate first order conditions for a horizontal curve to be a geodesic. We will in particular see that no first order \textit{necessary} condition can be given in general. However, we will give \textit{sufficient} conditions for a curve to be a critical point of the action with fixed endpoints, and \textit{sufficient} conditions for a curve to be abnormal. We will also see that in spite of this, there is still a Hamiltonian flow of geodesics in the strong case, and we will give conditions for the existence of such a flow for weak sub-Riemannian structures that specialize to the well-known corresponding conditions on Riemannian manifolds. After that, we go back to the case of $\ell^2$-product of Heisenberg groups, in order to highlight the problems and differences that appear in infinite dimensions.

But first, let us establish a few notations and definitions of symplectic geometry that will be necessary to define the Hamiltonian flow.
\subsection{Symplectic Gradient and Partial Symplectic Gradient}
\label{sec:symp}
Recall that a 2-form $\omega$  on a Banach manifold $N$ is said to be \textit{weak symplectic} if it is closed, and if the linear mapping $v\in T_xN\mapsto \omega_x(v,\cdot)\in T_x^*N$ is one-to-one for each $x$ in $N$. 

We now fix a Banach manifold $M$, modelled on a Banach space $\mathrm{B}$. Let $\omega$ be the canonical weak symplectic form on $T^*M$. Recall that $\omega$ is a closed 2-form on $T^*M$ defined in canonical coordinates by
$$
\omega_{q,p}(\delta q,\delta p;\delta q',\delta p')=\delta p(\delta q')-\delta p'(\delta q).
$$

\begin{remark}
For us, ``in canonical coordinates" will mean in a chart $\Psi:T^*M_{\vert U}\rightarrow \psi(U)\times \mathrm{B}^*$ of the form $\Psi(q,p)=(\psi(q),d\psi(q)_*p)$, with $\psi:U\mapsto\psi(U)\subset\mathrm{B}$ a diffeomorphism. We then identify $T^*M_{\vert U}\simeq\psi(U)\times \mathrm{B}^*$ so that $(q,p)\simeq (\psi(q),d\psi(q)_*p)$ for readability.
\end{remark}
\paragraph{Symplectic gradient of a function.} Take a smooth function $f:T^*M\rightarrow\R$, and let $(q,p)\in T^*M.$ We say that $f$ admits a \textit{symplectic gradient} at $(q,p)$ if there exists a vector $\nabla^\omega f(q,p)\in TT^*M$ such that
$$
df(q,p)=\omega(\nabla^\omega f(q,p),\cdot).
$$
In infinite dimensions, it is well-known that not every smooth function admits a symplectic (unless $\mathrm{B}$ is reflexive), see \cite{KMBOOK} for example. 

Now the partial derivative of $f$ along the fiber $\partial_pf(q,p)$ is defined intrinsically. It belongs to $(T_q^{*}M)^*=T_q^{**}M$. Denote by $j$ the canonical dense inclusion $T_qM\hookrightarrow T_q^{**}M$. If $\partial_pf(q,p)$ belongs to the image of $j$, it can then be identified to the vector $j^{-1}(\partial_pf(q,p))\in T_qM$, which we also denote $\partial_pf(q,p)$. In this case, $f$ does admit a symplectic gradient $\nabla^\omega f(q,p)$ at $(q,p)$, as this gradient is given in canonical coordinates by the formula
$$
\nabla^\omega f(q,p)=(\partial_pf(q,p),-\partial_qf(q,p))\in T_qM\times T_q^*M\simeq T_{(q,p)}T^*M.
$$

\paragraph{Symplectic Partial Gradient of a Function.} We now consider a vector bundle $\mathcal{E}$ on $M$, and denote by $T^*M\oplus_M \mathcal{E}$ the vector bundle with fiber $(T^*M\oplus_M \mathcal{E}h)_q=T^*_qM\times  \mathcal{E}_q$.
 Let $f:T^*M\oplus_M \mathcal{E}\rightarrow\R$ be a smooth function, and fix $(q,p,e)\in T^*M\oplus_M \mathcal{E}$. Assume that $\partial_pf(q_0,p_0,e_0)$ belongs to $j(T_qM)$, with $j:TM\rightarrow T^{**}M$ the canonical inclusion.
 \vspace{2mm}
 
Then, if $\partial_e f(q_0,p_0,e_0)=0$, $f$ admits a unique \textit{symplectic partial gradient} $\nabla^\omega f(q_0,p_0,e_0)\in T_{(q,p)}^*TM$ at $(q_0,p_0,e_0)$, in the following sense: for any smooth mapping $e:(q,p)\mapsto e(q,p)\in\mathcal{E}_q$ with $e(q_0,p_0)=e_0$, we have
$$
d(f(q,p,e(q,p)))_{q=q_0,p=p_0}=\omega(\nabla^\omega f(q_0,p_0,e_0),\cdot).
$$
Indeed, in a local trivialisation of $\mathcal{E}$ and canonical coordinates on $T^*M$ near $(q_0,p_0)$, we have
$$\begin{aligned}
d(f(q,p,e(q,p)))_{q=q_0,p=p_0}&=\partial_qf(q_0,p_0,e_0)+\partial_pf(q_0,p_0,e_0)+ \underset{=0}{\underbrace{\partial_ef(q_0,p_0,e_0)}}.de(q_0,p_0)\\
&=\partial_qf(q_0,p_0,e_0)+\partial_pf(q_0,p_0,e_0).
\end{aligned}
$$
This gradient is therefore given in canonical coordinates by 
$$
\nabla^\omega f(q_0,p_0,e_0)=(\partial_p f(q_0,p_0,e_0),-\partial_qf(q_0,p_0,e_0)).
$$

\paragraph{Restriction of $\omega$ to a dense sub-bundle.} Let $\tau  M\hookrightarrow T^*M$ be a smooth, dense sub-bundle of $T^*M$.
We have the following trivial (but crucial) lemma.
\begin{remark}
We use a $*$ in $\tau M$ to emphasize that we are in a sub-bundle of a dual space. We \textit{do not} require $\tau M$ to be the dual bundle of some vector bundle $\tau M$.
\end{remark}
\begin{lem}
The restriction of $\omega$ to $\tau  M$, i.e., its pull-back through the inclusion map $\tau  M\rightarrow T^*M$, is a weak symplectic form on $\tau  M$. We still denote it $\omega$.
\end{lem}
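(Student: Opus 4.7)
The plan is to verify the two defining properties of a weak symplectic form---closedness and weak non-degeneracy---separately. Let $\iota:\tau M\hookrightarrow T^*M$ denote the inclusion, so that by definition the form in question is $\iota^*\omega$.

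Closedness is immediate: exterior differentiation commutes with pull-back, hence $d(\iota^*\omega)=\iota^*(d\omega)=0$ since $\omega$ is itself closed on $T^*M$.

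For weak non-degeneracy, fix $(q,p)\in\tau M$ and work in a canonical chart as in the preceding remark. Trivialize $T^*M$ near $q$ as $\psi(U)\times\mathrm{B}^*$, and let $\mathrm{T}\hookrightarrow\mathrm{B}^*$ be the typical fiber of $\tau M$, i.e.\ a Banach space that embeds continuously and (by assumption) densely in $\mathrm{B}^*$; then $\tau M_{\vert U}\simeq\psi(U)\times\mathrm{T}$ and $T_{(q,p)}\tau M$ identifies with $\mathrm{B}\times\mathrm{T}$. Under these identifications the pulled-back form reads
$$(\iota^*\omega)_{(q,p)}\bigl((\delta q,\delta p),(\delta q',\delta p')\bigr)=\delta p(\delta q')-\delta p'(\delta q),$$
for $(\delta q,\delta p),(\delta q',\delta p')\in\mathrm{B}\times\mathrm{T}$. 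Suppose this expression vanishes for every $(\delta q',\delta p')$. Setting $\delta p'=0$ and varying $\delta q'$ over $\mathrm{B}$ forces $\delta p=0$. Setting $\delta q'=0$ and varying $\delta p'$ over $\mathrm{T}$ forces $\delta p'(\delta q)=0$ for every $\delta p'\in\mathrm{T}$. If $\delta q\neq 0$, Hahn--Banach produces $\ell\in\mathrm{B}^*$ with $\ell(\delta q)\neq 0$, and density of $\mathrm{T}$ in $\mathrm{B}^*$ (in operator norm) yields some $\delta p'\in\mathrm{T}$ close enough to $\ell$ that $\delta p'(\delta q)\neq 0$, a contradiction. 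Hence $\delta q=0$.

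The only genuinely non-formal ingredient is this last step: tangent vectors to $\tau M$ have a \emph{small} $p$-component, living in $\mathrm{T}$ rather than in all of $\mathrm{B}^*$, while they are paired against full tangent vectors $\delta q\in\mathrm{B}$ on the $q$-side. Non-degeneracy therefore does not follow automatically from that of $\omega$ and genuinely uses the hypothesis that $\tau M$ be dense in $T^*M$, combined with Hahn--Banach to ensure that $\mathrm{B}^*$---and hence the dense subspace $\mathrm{T}$---separates points of $\mathrm{B}$.
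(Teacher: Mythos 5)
Your proof is correct and follows the same route as the paper: closedness via naturality of the exterior derivative under pull-back, and weak non-degeneracy from the density of the fiber $\mathrm{T}$ in $\mathrm{B}^*$. The paper's own proof is just a two-sentence assertion of these facts, so your chart computation with Hahn--Banach simply supplies the detail it leaves implicit.
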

\begin{proof}
The 2-form $\omega$ on $\tau M$ is closed as the pull-back of a closed 2-form. It is not degenerate because of the density of $\tau M$ in $T^*M$.
\end{proof}

A smooth map $f:\tau M\rightarrow\R$ then admits a symplectic gradient at $(q,p)\in\tau M$ if and only if, in canonical coordinates, the following conditions are satisfied:
\begin{enumerate}
\item $f$ admits a symplectic gradient at $(q,p)$ in $T^*M$, i.e., the partial derivative of $f$ along the fiber $\partial_pf(q,p)\in T^{**}_qM$ belongs to the image of the canonical embedding $j:T_qM\hookrightarrow T_q^{**}M$, through which it can be identified to a vector $\nabla_pf(q,p)\in T_qM$, and
\item the symplectic gradient of $f$ belongs to $T\tau_{(q,p)}^*M\subset TT^*_{(q,p)}M$. In other words, in a canonical chart, $\partial_qf(q,p)$ (which, in the chart, is an element of $T^*_qM$) actually belongs to the dense subspace $\tau _qM$.
\end{enumerate}
In this case, in those coordinates, we can indeed write 
$$
\nabla^\omega f(q,p)=(\nabla_pf,-\partial_qf)\in T_{(q,p)}\tau M\subset T_{(q,p)}T^*M,
$$
which is a stable property under a change of canonical coordinates.
\vspace{2mm}

We can now investigate the geodesics of a sub-Riemannian structure.

\subsection{Manifold Structure on the Set of Horizontal Systems}
Let $M$ be a Banach manifold endowed with a smooth sub-Riemannian structure $(\h,\xi,g)$. Let us fix $I=[0,1]$ to simplify notations. We denote
$$
\Omega^\mathcal{H}=\{(q,u)\in H^1\times L^2(I,\h)\mid (q,u)\ \text{horizontal}\}
$$
the set of all horizontal systems. We also define, for $q_0,q_1$ in $M$,
$$
\Omega^\mathcal{H}_{q_0}=\{(q,u)\in \Omega^\mathcal{H}\mid q(0)=q_0\},\quad \Omega^\mathcal{H}_{q_0,q_1}=\{(q,u)\in \Omega^\mathcal{H}\mid q(0)=q_0,\ q(1)=q_1\}.
$$
To give conditions for a curve to be a geodesic, it is natural to study critical points of the action among horizontal systems with fixed endpoints (that is, critical points of $A$ on $\Omega^\h_{q_0,q_1}$). For this, we need to put a manifold structure on the space of horizontal systems.

When $M$ is a $d$-dimensional manifold, the space $H^1\times L^2(I,\mathcal{H})$ as defined in \eqref{eq:h1l2} is a smooth Banach manifold (see for example the appendix of \cite{MBOOK}), as an $L^2(I,\mathrm{H})$-fiber bundle over $H^1(I,M)$, which is a Hilbert manifold, see \cite{EM,ES}.

This is no longer the case in general when $M$ is a Banach manifold. Indeed, to build the atlas for $H^1(I,M)$ one needs a smooth \textit{local addition}: a smooth diffeomorphism $F_1\subset TM\rightarrow F_2\subset M\times M$, from $F_1$ neighbourhood of the zero section onto $F_2$ neighbourhood of the diagonal, such that any $(q,0)\in U$ is mapped to $(q,q)$ \cite{KMBOOK}.

However, since the concept of geodesic is a local one (or, in the case of minimizing geodesics, only concerns curves without self-intersections), we can study a horizontal system on small enough time intervals that the trajectory stays on a domain on which we can trivialize both $\h$ and $TM$. For our purpose in this section, we can therefore assume that $M$ is an open subset of $\mathrm{B}$, and that $\h\simeq M\times \mathrm{H}$. In this case, $H^1\times L^2(I,\h)$ is just an open subset of $H^1(I,M)\times L^2(I,\mathrm{H})$. Then, we have the following result, first proved in \cite{ATY}[Lemma 3] but whose proof we include for the sake of completeness.
\begin{propo}\label{prop:str}
We keep the notations and assumptions of the previous discussion. Fix $q_0$ in $M$. Then the space $\Omega^\h_{q_0}$ of horizontal systems whose trajectories start at $q_0$  is a smooth submanifold of $H^1\times L^2(I,\mathcal{H}_{\vert U})$, diffeomorphic to an open subset of $L^2(I,\mathrm{H})$ through
$$
u\in \mathcal{U}\subset L^2(I,\mathrm{H})\mapsto (q(u),u)\in H^1\times L^2(I,\mathcal{H}_{\vert U})
$$
The  \textbf{trajectory map} $u\mapsto q(u)$ is obtained by solving the Cauchy problem
$$
q(0)=q_0,\quad \dot{q}(t)=\xi_{q(t)}u(t).
$$
\end{propo}
\begin{proof}
Define $C: H^1\times L^2(I,\mathcal{H})\rightarrow L^2(I,\mathrm{B})$ by
$$
C(q,u)(t)=\dot{q}(t)-\xi_{q(t)}u(t).
$$
Note that since $M$ is an open subset of $\mathrm{B}$ in our case, we have $TM\simeq M\times\mathrm{B}$. 

Then $\Omega^h_{q_0}=C^{-1}(\{0\})$. Now fix $(q,u)\in\Omega^\h_{q_0}$. The operator $\partial_q C(q,u):H^1(I,\mathrm{B})\rightarrow L^2(I,\mathrm{B})$ is given by
$$
(\partial_q C(q,u).\delta q)(t)=\dot{\delta q}(t)-(\partial_{q}\xi_{q(t)}u(t)).\delta q(t),\quad \delta q\in H^1(I,\mathrm{B}).
$$
This is obviously a Banach isomorphism: for any $a\in L^2(I,\mathrm{B})$,
$$
\delta q(0)=0,\quad (\partial_q C(q,u).\delta q)(t)=a(t),\quad a.e. t\in I
$$
is just a linear Cauchy problem, and hence admits a unique global solution $\delta q=\partial_qC(q,u)^{-1}a$. The implicit function theorem then states that $\Omega^h_{q_0}=C^{-1}(\{0\})$ is the graph of a smooth mapping $u\mapsto q(u)$ from an open subset of $L^2(I,\mathrm{H})$ onto $H^1(I,M)$. 
\end{proof}
Consequently, we will often identify $L^2(I,\mathrm{H})$ with $\Omega^\h_{q_0}$, and a control $u$ with the corresponding horizontal system $(q(u),u)$.

\subsection{Endpoint Mapping and Critical Points of the Action} 
The endpoint mapping is defined in the following trivial corollary of Proposition \ref{prop:str}.
\begin{cor} 
The so-called \textbf{Endpoint map} $E:(q,u)\in\Omega_{q_0}\mapsto q(u)(1)$ is smooth. Its derivative at $u$ in the direction $\delta u\in L^2(I,\mathrm{H})$ is equal to $\delta q(1)$, where $(q,\delta q)\in H^1(I,TM)$ and $\delta q$ is obtained by solving the linear Cauchy problem
$$
\delta q(0)=0,\quad \dot{\delta q}(t)=\partial_q(\xi_{q(t)}u(t)).\delta q(t)+\xi_{q(t)}\delta u(t).
$$
\end{cor}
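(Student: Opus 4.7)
The plan is to deduce this almost immediately from Proposition \ref{prop:str}. Since that proposition gives us that the map $u \mapsto (q(u),u)$ is a smooth diffeomorphism from an open subset of $L^2(I,\mathrm{H})$ onto $\Omega^\h_{q_0}$, the endpoint map $E$ factors as $u \mapsto q(u) \mapsto q(u)(1)$. So the whole argument reduces to two routine facts: first, that the projection $(q,u) \mapsto q$ from $H^1 \times L^2(I,\h)$ to $H^1(I,M)$ is smooth (trivial, since we are in a product chart), and second, that the evaluation map $\mathrm{ev}_1 : H^1(I,\mathrm{B}) \to \mathrm{B}$, $q \mapsto q(1)$, is continuous linear, hence smooth. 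Composing a smooth map with a continuous linear map yields a smooth map, so $E$ is smooth.

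For the derivative formula, I would argue by implicit differentiation on the defining equation $C(q(u),u) = 0$, where $C$ is the smooth operator $C(q,u)(t) = \dot q(t) - \xi_{q(t)} u(t)$ introduced in the proof of Proposition \ref{prop:str}. Differentiating this identity in $u$ at a fixed $u_0$, in the direction $\delta u$, gives
\begin{equation*}
\partial_q C(q,u)\cdot \delta q + \partial_u C(q,u)\cdot \delta u = 0,
\end{equation*}
where $\delta q = \partial_u q(u)\cdot \delta u \in H^1(I,\mathrm{B})$ satisfies $\delta q(0)=0$ (since $q(0)=q_0$ is fixed). Computing the two partial derivatives in our chart, $(\partial_q C(q,u)\cdot \delta q)(t) = \dot{\delta q}(t) - \partial_q(\xi_{q(t)} u(t))\cdot \delta q(t)$ and $(\partial_u C(q,u)\cdot \delta u)(t) = -\xi_{q(t)}\delta u(t)$, which rearranges to precisely the linear Cauchy problem stated in the corollary. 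Then $dE(u)\cdot \delta u = d\,\mathrm{ev}_1(q(u))\cdot \delta q = \delta q(1)$ by the chain rule.

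There is essentially no real obstacle here: Proposition \ref{prop:str} already contains all the analytic content (the isomorphism property of $\partial_q C(q,u)$, which guarantees that the linear Cauchy problem above has a unique $H^1$ solution). The only mild subtlety worth flagging is that $\mathrm{ev}_1$ must be justified as continuous linear from $H^1(I,\mathrm{B})$ to $\mathrm{B}$, which follows from the continuous embedding $H^1(I,\mathrm{B}) \hookrightarrow \mathcal{C}^0(I,\mathrm{B})$ used implicitly throughout the paper.
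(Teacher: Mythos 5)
Your proposal is correct and is exactly the argument the paper intends: it presents this statement as a trivial corollary of Proposition \ref{prop:str}, whose proof already supplies the operator $C$, the isomorphism $\partial_q C(q,u)$, and the implicit function theorem, from which your implicit differentiation and composition with the (continuous linear) evaluation at $t=1$ immediately yield both the smoothness and the derivative formula. Nothing is missing.
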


Note that $\Omega^\h_{q_0,q_1}=E^{-1}(\{q_1\}).$ Therefore, when looking for geodesics between $q_0$ and $q_1$, one attempts to solve the smooth constrained optimal control problem of minimizing
$$
A(u)=A(q,u)=\fr\int_0^1g_{q(t)}(u(t),u(t))dt
$$
among all $(q,u)$ in $\Omega_{q_0}^\h$ such that $E(q,u)=q_1$.
% Since the trajectory of such a system obviously has no self-intersection point, we can restrict ourselves to simple horizontal curves, and therefore to coordinate neighbourhoods in $M$ and local spaces of controls $\mathcal{U}$ in $\Omega^\h_{q_0}$, so that $(q,u)=(q(u),u)$ as in Proposition \ref{prop:str}, and both $u\mapsto A(u)=A(q(u),u)$ and $u\mapsto E(u)=E(q(u),u)$ are smooth mappings.

\paragraph{The three types of sub-Riemannian geodesics.} Before we move on, we need to discuss the apparition in infinite dimension of a new type of geodesics, called \textit{elusive geodesics}. They were introduced for the first time in \cite{AT}. 

Fix a minimizing geodesic $(q,u)=(q(u),u)$, which we identify with the corresponding optimal control $u$. We know that the smooth map $F=(A,E):\Omega_{q_0}^\h\rightarrow \R\times M$ must have a derivative that is not onto. We have two possibilities:
\begin{enumerate}
\item The range of $dF(u)$ has positive codimension in $\R\times T_{q_1}M$, that is, its closure is a proper subset of $T_{q_1}M$.
\item The range of $dF(u)$ is a proper dense subset of $\R\times T_{q_1}M$. This can only happen when $M$ is infinite dimensional.
\end{enumerate}
Using a cotangent viewpoint, these condition can be reformulated as:
\begin{enumerate}
\item There exists $(\lambda,p_1)\in \{0,1\}\times T^*_{q_1}M\setminus\{(0,0)\}$ such that
$$
\lambda dA=dE(u)^*p_1,
$$
where $dE(u)^*:T^*_{q_1}M\rightarrow L^2(I,\mathrm{H})^*$ is the adjoint map of $dE(u)$. Depending on the value of $\lambda$, this splits into two subcases:\begin{enumerate}
\item \textit{The normal case:} $\lambda=1$, which gives $ dA=dE(u)^*p_1$. This corresponds to a \textit{normal geodesic}, from which we will derive the Hamiltonian flow later in the section.
\item \textit{The abnormal case:} $\lambda=0$, which gives $ 0=dE(u)^*p_1$ and $p_1\neq0$. This implies that $u$ is a \textit{singular control} (i.e., a critical point of the endpoint map), and we say that $(q,u)$ is an \textit{abnormal geodesic}. While there is no characterization of abnormal geodesics, even in finite dimensions, there is a nice Hamiltonian characterization yielding all singular controls \cite[Chapter 5]{MBOOK}. We will give the infinite dimensional version of this result.
\end{enumerate}
\item $dF(u)^*$ is one-to-one, which is no different from the case of non minimizing curves. This gives no useful Hamiltonian characterization. We say that $(q,u)$ is an \textit{elusive geodesic}.
\end{enumerate}
This problematic second is the reason why there is no Pontryagin principle in infinite dimensions \cite{LY}. It is actually a very common occurence in infinite dimensional sub-Riemannian geometry. For example, any curve in the $\ell^2$ product of Heisenberg groups with no constant component is elusive.
\begin{remark}
As discussed in \cite{A2,ATY,AT}, an interpretation of this is that the topology induced by the sub-Riemannian distance is much finer than the manifold topology. Hence, there are not enough Lagrange multipliers $p_1$. However, if the sub-Riemannian structure can be restricted to a smooth dense embedded submanifold $M'\subset M$, that is, a manifold modelled on a Banach space with dense and continuous inclusion in $\mathrm{B}$, such that $M'$ contains $\mathcal{O}_{q_0}$, then $T^*_{q_1}M\varsubsetneq T_{q_1}^*M'$, and we obtain additional multipliers, which turns some elusive curves into additional normal and abnormal extremums that are more easily characterized.

This is the case when $M=\ell^2(\N,\R^3)$ is the $\ell^2$-product of Heisenberg groups , where one can restrict the structure to $M'=\ell^2(\N,\R^2)\times \ell^1(\N,\R)$, whose cotangent space at $0$ is $\ell^2(\N,\R^2)\times \ell^\infty(\N,\R)$ which is much bigger than $T_0^*M=\ell^2(\N,\R^3)$.

The question of finding the ``right" tangent bundle, that is, one for which there are no elusive geodesics, is open an would probably require more powerful and innovative tools to solve. For example, the structure described in Section \ref{sec:nonconv} seems to indicate that the correct dense submanifold to which we should restrict the structure would be $\ell^2(\N,\R^2)\times\ell^1(\N,\R)\times\ell^{2/3}(\N,\R)$, which is not even locally convex and therefore has a dual space that is too small.
\end{remark}

So there are no first order necessary conditions for a control to yield a geodesic in infinite dimensions. However, there is a partial converse to this result which does remains true. First of all, we say that a control $u$ is a critical point of the action with fixed enpoint if, for any $\mathcal{C}^1$ family of controls $s\in(-\varepsilon,\varepsilon)\subset\R\mapsto u_s$ such that $q(u_s)(1)=q_1$ for each $s$ and $u_0=u$, we have
$$
\partial_s(A(u_s))_{\vert s=0}=0.
$$
Note that any geodesic is such a critical point. Then the following result is immediate.
\begin{lem}\label{lem:conv}
Fix a control $u\in \mathcal{U}$.
\begin{enumerate}
\item If there exists $p_1\in T_{q_1}^*M$ such that $dA(u)=dE(u)^*p_1$, then $u$ is a critical point of the action with fixed endpoints.
\item If there exists $p_1\in T_{q_1}^*M\setminus\{0\}$ such that $0=dE(u)^*p_1$, then $u$ is a singular control.
\end{enumerate}
\end{lem}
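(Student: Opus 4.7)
The plan is to treat both assertions as direct duality computations involving the adjoint $dE(u)^*$, in line with the authors' description of the lemma as immediate. Beyond the definitions of the action $A$, the endpoint map $E$, and singular controls already established earlier in the section, no further analytic input is needed, so the strategy is purely formal.

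For part (1), I would take an arbitrary $\mathcal{C}^1$ family $s\mapsto u_s \in \mathcal{U}$ with $u_0 = u$ and $q(u_s)(1) = q_1$ for every $s$ in some neighbourhood of $0$, and set $\delta u := \partial_s u_s|_{s=0} \in L^2(I,\mathrm{H})$. Differentiating the endpoint constraint $E(u_s) = q_1$ at $s=0$ yields $dE(u)\cdot\delta u = 0$. Substituting the hypothesis $dA(u) = dE(u)^* p_1$ then gives
$$\partial_s A(u_s)|_{s=0} = dA(u)\cdot\delta u = \bigl(dE(u)^*p_1\bigr)(\delta u) = p_1\bigl(dE(u)\cdot\delta u\bigr) = 0,$$
which is exactly the defining property of a critical point of the action with fixed endpoints.

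For part (2), the identity $dE(u)^* p_1 = 0$ with $p_1 \neq 0$ says that the nonzero continuous linear functional $p_1$ annihilates the entire image of $dE(u)$. Consequently the range of $dE(u)$ lies in the proper closed hyperplane $\ker p_1 \subsetneq T_{q_1}M$, so $dE(u)$ is not surjective; by the convention recalled in the preceding discussion of normal, abnormal and elusive geodesics, this is precisely what it means for $u$ to be a singular control. No genuine obstacle arises: nothing in the argument uses infinite dimensionality, completeness of $g$, or any PMP-style result. The only point worth flagging is the convention that \emph{singular} refers here to $dE(u)$ failing to be onto, so that the existence of a nontrivial annihilator in the hypothesis of part (2) already suffices to conclude.
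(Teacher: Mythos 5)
Your proof is correct and is exactly the routine duality argument the paper has in mind: the paper states the lemma as ``immediate'' and offers no written proof, and your two computations (differentiating the endpoint constraint and pairing with $p_1$ for part (1); noting that a nonzero annihilator forces the range of $dE(u)$ into a proper closed hyperplane for part (2)) are the standard way to fill in the details. No gaps.
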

To obtain a workable version of these conditions, we need to compute, for any control $u$ and $p_1\in T^*_{q_1}M,$ a good expression for $\lambda dA(u)-dE(u)^*p_1$, with $\lambda=0,1$. This is given by the Hamiltonian formulation.

\subsection{Hamiltonian Formulation}
We keep the same notations as in the previous section. We define the  Hamiltonian $H^\lambda:T^*M\oplus_M\h\rightarrow \R$ of the problem by the smooth expression
$$
H^\lambda(q,p,u)=p(\xi_qu)-\frac{\lambda}{2}g_q(u,u).
$$
Here, $T^*M\oplus_M\h$ is the vector bundle above $M$ with fiber at $q$ given by $T^*_qM\times \h_q$. 

Notice that the (intrinsically defined) partial derivative of $H^\lambda$ in $p$ satisfies
$$
\partial_pH^\lambda(q,p,u).\delta p=\delta p(\xi_qu),\quad \delta p\in T^*qM,
$$
so that $\partial_pH^\lambda(q,p,u)$ can be identified to $\xi_qu$ through the canonical inclusion $T_qM\rightarrow T^{**}_qM$. Consequently, $H^\lambda$ admits a symplectic gradient $\nabla^\omega H^\lambda(q,p,u)$ (see Section \ref{sec:symp} for the appropriate definitions), given in canonical coordinates by
$$
\nabla^\omega H^\lambda(q,p,u)=(\partial_pH(q,p,u),-\partial_qH(q,p,u)).
$$
\begin{propo}
Fix a control $u$ in a local space of controls $\mathcal{U}$ such that $(q,u)=(q(u),u)\in\Omega_{q_0,q_1}^\h$. Then
\begin{equation}\label{eq:form1}
\lambda dA(u)=dE(u)^*p_1,\quad (\lambda,p_1)\in \{0,1\}\times T^*_{q_1}M\setminus\{(0,0)\},
\end{equation}
if and only if there exists $t\in I\mapsto p(t)\in T^*_{q(t)}M$ of class $H^1$ such that $p(1)=p_1$ and, for almost every $t$ in $t$,
\begin{equation}\label{eq:form2}
\begin{aligned}\exists t\in I\mapsto p(t)\in T^*_{q(t)}M \text{ of class }H^1,\ p(1)=p_1,\text{ and, a.e.}\ t\in I,\\
\left\lbrace
\begin{aligned}
0\ \ \ \quad&=\partial_u H^\lambda(q(t),p(t),u(t)),\\
(\dot{q}(t),\dot{p}(t))&=\nabla^\omega H^\lambda(q(t),p(t),u(t)).
\end{aligned}\right.\qquad\qquad
\end{aligned}
\end{equation}
In this case, $(q,u)$ is automatically a critical point of the action with fixed endpoints when $\lambda=1$, and a critical point of the endpoint map (i.e., $u$ is a singular control) when $\lambda=0$.
\end{propo}
As mentionned in the previous section, the converse to the last statement is not true. See Section \ref{.} for examples.
\begin{remark}
Note that \eqref{eq:form1}, and even the definition of singular controls and critical points of the action, requires the local viewpoint we adopted (i.e., that $M$ is an open subset of $\mathrm{B}$). 

However, that is not the case for \eqref{eq:form2}. Indeed, even though $\nabla^\omega H^\lambda(q(t),p(t),u(t))$ usually depends on a trivialization of $\h$, its value becomes intrisic when $\partial_uH^\lambda(q(t),p(t),u(t))$ vanishes (see Section \ref{sec:symp} for a proof). We can then simply use condition \eqref{eq:form2} to identify geodesics and singular curves even in the global viewpoint.
\end{remark}
The proof was given in \cite{AT} for the special case of strong structures on groups of diffeomorphisms. The general proof is almost the same.
\begin{proof}
The proof is the same as in finite dimensions. Fix $u\in L^2(0,1;\mathrm{H})$ $q$ the corresponding trajectory, and $(\lambda,p_1)\in \{0,1\}\times T^*_{q_1}M\setminus\{(0,0)\}$. Take $\delta u\in L^2(0,1;\mathrm{H})$. We have $dE(u).\delta u=\delta q(1)$, with $\delta q\in H^1(I,\mathrm{B})$ solution of
$$
\delta q(0)=0,\quad \dot{\delta q}(t)=\partial_q(\xi_{q(t)}u(t))+\xi_{q(t)}\delta u(t).
$$
Hence
\begin{equation}\label{eq:int1}
\lambda dA(u)-dE(u)^*p_1=\int_0^1\left(\lambda g_{q(t)}(u(t),\delta u(t))+\frac{\lambda}{2}\partial_q(g_{q(t)}(u(t),u(t))).\delta q(t)\right)dt - p(1)(\delta q(1)),
\end{equation}
where $t\mapsto p(t)\in T^*_{q(t)}M$ solves the linear Cauchy problem
$$
p(1)=p_1,\quad \dot{p}(t)=-\partial_qH^\lambda (q(t),p(t),u(t))=-\partial_q(\xi_{q(t)}u(t))^*p(t)+\frac{\lambda}{2}\partial_q g_{q(t)}(u(t),u(t)).
$$
But we see that $\frac{\lambda}{2}\partial_q(g_{q(t)}(u(t),u(t)))=\dot{p}(t)+\partial_q(\xi_{q(t)}u(t))^*p(t)$ so that a term $\dot{p}(t)\delta q(t)$ appears in the right-hand side of \eqref{eq:int1}. An integration by part on this term, and the fact that $\delta q(0)=0$ will yield
$$
\lambda dA(u)-dE(u)^*p_1=\int_0^1
\left(
\lambda g_{q(t)}(u(t),\delta u(t))+p(t)(\partial_q(\xi_{q(t)}u(t)).\delta q(t))-p(t)(\dot{\delta q}(t))\right)dt.
$$
But replacing $\dot{\delta q}(t)$ with $\partial_q(\xi_{q(t)}u(t))+\xi_{q(t)}\delta u(t)$ finally gives us
$$
\lambda dA(u)-dE(u)^*p_1=\int_0^1
\left(
\lambda g_{q(t)}(u(t),\delta u(t))-p(t)(\xi_{q(t)}\delta u(t))\right)dt
=
-\int_0^1\partial_uH^\lambda(q(t),p(t),u(t))dt
.
$$
In particular,
$$
\lambda dA(u)-dE(u)^*p_1\quad \Longleftrightarrow \partial_uH^\lambda(q(t),p(t),u(t))=0\ \text{a.e.}\ t\in I.
$$
\end{proof}

\subsection{Hamiltonian Geodesic flow}
We now investigate the existence of a Hamiltonian flow for the normal geodesics. We will find that, much like in the Riemannian case, strong structures always admit such a flow, while additional assumptions are required for weak sub-Riemannian manifold.

\paragraph{The strong case.} We assume for now that the sub-Riemannian structure is strong. In this case, because each $g_q$ is a Hilbert product,  the equation
$$
\partial_uH^1(q,p,u)=0=\xi_q^*p-g_q(u,\cdot)
$$
has a unique solution $u(q,p)=G_q^{-1}\xi_q^*p$ for any $(q,p)\in T^*M$ (Riesz representation theorem). Here $G_q^{-1}$ is the smooth inverse of the smooth vector bundle isomorphism $G:u\in\h_q\mapsto g_q(u,\cdot)\in \h^*$, also called the musical operator. This lets us define the \textit{normal Hamiltonian} of the structure $h:T^*M\rightarrow\R$ by
\begin{equation}\label{eq:normham}
h(q,p)=H^1(q,p,u(q,p))=\fr g_q(u(q,p),u(q,p)).
\end{equation}
Do note that, since $H^1$ is strictly concave in $u$, we can also write $h(q,p)=\max_{u\in\h_q}H^1(q,p,u)$.

Now thanks to the fact that $\partial_uH^1(q,p,u(q,p))=0$, $h$ admits a smooth symplectic gradient given by
$$
\nabla^\omega h(q,p)=\nabla^\omega H^1(q,p,u(q,p)),\quad (q,p)\in T^*M.
$$
This gradient can be integrated into a well-defined smooth local flow that we call the \textit{Hamiltonian geodesic flow.}

\begin{theo}[Hamiltonian geodesic flow: strong case] On a strong sub-Riemannian manifold, the normal Hamiltonian is well-defined, and for any $(q_0,p_0)\in T^*M$, there is a unique maximal solution to the normal Hamiltonian equation
$$
(\dot{q}(t),\dot{p}(t))=\nabla^\omega h(q(t),p(t)).
$$
More importantly, any such solution $t\mapsto (q(t),p(t))$ projects to a \textbf{geodesic} $q(\cdot)$ on $M$.
\end{theo}
We will give the proof for this theorem at the same time as that for the weak case later in the section.

\paragraph{Adapted cotangent sub-bundles.}
Some extra difficulties can appear when the metric is weak. More precisely, the equation $\partial_uH=0$ may not have a solution for every $(q,p)\in T^*M$, so that the normal Hamiltonian may not be defined. Hence, we need to restrict ourselves to a subspace on which it is well-defined. We will need the following definitions.

\begin{defi}
A \textbf{relative cotangent bundle} is a Banach vector bundle $\tau M\rightarrow M$ which admits a smooth, {dense} immersion in $T^*M$.

Such a bundle $\tau M$ is said to be \textbf{adapted} to a sub-Riemannian structure $(\h,\xi,g)$ on $M$ if, for every $q\in M$ and every $p\in \tau_q^*M$, there exists $u(q,p)\in \h_q$ such that
$$
\xi_q^*p=g_q(u(q,p),\cdot).
$$
\end{defi}
In other words, a sub-bundle $\tau M$ is adapted to the structure if the normal Hamiltonian can be define as in \eqref{eq:normham} on $\tau M$Note that such a $u(q,p)$ is always uniquely determined by $q$ and $p$ (and linear in $p$). Indeed, $g_q$ is positive definite, so $u\mapsto g_q(u,\cdot)$ is injective.

\begin{remark}
If $g$ is a strong metric, then $T^*M$ itself is of course adapted to the structure.
\end{remark}

\begin{remark}
In the case of a weak Riemannian structure $(\h,\xi)=(TM,\mathrm{Id}_{TM})$, one can take $\tau M=g(TM,\cdot)$, with the topology of $TM$ itself. It is the only relative cotangent bundle adapted to the structure.
\end{remark}

\begin{remark}
More generally, if $\h$ is a closed subbundle of $TM$ and $g$ the restriction of a weak Riemannian metric to $\h$, one can still take $\tau M=g(TM,\cdot)$. This is the relative cotangent bundle used to find the geodesic equations in \cite{GMV}.
\end{remark}

We now have the following trivial (but crucial) lemma.
\begin{lem}
The restriction of $\omega$ to a dense sub-bundle $\tau M\hookrightarrow T^*M$, i.e., its pull-back through the inclusion map, is a weak symplectic form on $\tau M$. We still denote it $\omega$.
\end{lem}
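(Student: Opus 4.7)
The plan is to verify the two defining properties of a weak symplectic form separately: closedness, which is essentially formal, and non-degeneracy, which is where the density hypothesis will do the real work.

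For closedness, I would simply invoke naturality of the exterior derivative under pull-back. Writing $i : \tau M \hookrightarrow T^*M$ for the smooth dense inclusion, the restriction of $\omega$ to $\tau M$ is the pull-back $i^*\omega$, and therefore $d(i^*\omega) = i^*(d\omega) = 0$ since $\omega$ is closed on $T^*M$. This step is routine and requires nothing specific to the infinite-dimensional setting.

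For non-degeneracy, I would work in canonical coordinates, identifying $\tau M$ over a chart with $\psi(U) \times \tau_q M$ and the tangent space at a point $(q,p)$ with $T_q M \times \tau_q M$. In these coordinates $\omega$ is still given by the formula
\[
\omega_{(q,p)}\big((\delta q, \delta p),(\delta q', \delta p')\big) = \delta p(\delta q') - \delta p'(\delta q),
\]
now with $\delta p, \delta p' \in \tau_q M$. Assume $(\delta q, \delta p)$ lies in the radical, i.e.\ this expression vanishes for every $(\delta q', \delta p') \in T_q M \times \tau_q M$. Specializing to $\delta p' = 0$ gives $\delta p(\delta q') = 0$ for all $\delta q' \in T_q M$, which forces $\delta p = 0$ because $\delta p \in \tau_q M \subset T_q^* M$. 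Specializing instead to $\delta q' = 0$ gives $\delta p'(\delta q) = 0$ for every $\delta p' \in \tau_q M$; here is the one substantive point, where I use that $\tau_q M$ is dense in $T_q^* M$ to conclude by continuity that the evaluation $j(\delta q) \in T_q^{**}M$ vanishes on all of $T_q^* M$, and hence $\delta q = 0$ by injectivity of the canonical embedding $j : T_q M \hookrightarrow T_q^{**} M$.

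The only potential obstacle is keeping the bookkeeping straight: one must ensure that the tangent space to $\tau M$ really decomposes as $T_q M \times \tau_q M$ in the chosen trivialization (so that the ``vertical'' component of a test vector $(\delta q', \delta p')$ ranges over the dense subspace $\tau_q M$, not over all of $T_q^* M$), and that the density argument is applied in the correct direction. Both the injectivity of $j$ and the continuity of the pairing $T_q^* M \times T_q M \to \mathbb{R}$ are standard Banach-space facts, so no further analytic input is needed.
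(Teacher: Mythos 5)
Your proof is correct and follows the same route as the paper's: closedness via naturality of the pull-back, and non-degeneracy from the density of $\tau_q M$ in $T_q^*M$ (the paper states both points in one line; you have simply written out the coordinate computation and the Hahn--Banach/biduality step that the paper leaves implicit). No gaps.
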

\begin{proof}
The 2-form $\omega$ on $\tau M$ is closed, since it is the pull-back of a closed 2-form. The fact that it is not degenerate comes fromthe density of $\tau M$ in $T^*M$.
\end{proof}

Let $(\h,\xi,g)$ be a weak sub-Riemannian structure on a Banach manifold $M$, with typical fiber $H$. Let $\tau M$ be an adapted relative cotangent bundle. The restriction of $H^1$ to $\tau M$ is given by
$$
H^1(q,p,u)=p(\xi_qu)-\frac{1}{2}g_q(u,u)=g_q\left(u(q,p)-\frac{1}{2}u,u\right),
$$
so that we can indeed define the normal Hamiltonian on $\tau M$ by
$$
h(q,p)=\max_{u\in \h_q}(H^1(q,p,u))=h(q,p,u(q,p))=\frac{1}{2}g_q(u(q,p),u(q,p)).
$$
%Then the equation $H^1(q,p,u)=0$ Let us make a few remarks. First of all, $h$ is strictly concave in $u$ for fixed $(q,p)$. Second of all, $h$ is polynomial in the fibers. Its partial derivative along the $\h$ fibers is given by 
%$$
%\partial_uh(q,p,u)(\delta u)=p(\xi_q\delta u)-g_q(u,\delta u)=\xi_q^*p(\delta u)-g_q(u,\delta u)=g_q(u(q,p)-u,\delta u),
%$$
%so that $h$ admits $u(q,p)$ as a critical point (and therefore global maximum) in $u$ for fixed $(q,p)$. This lets us define the \textit{reduced Hamiltonian} of the system $H:\tau M\rightarrow \R$ by 
%$$
%H(q,p)=\max_{u\in \h_q}(h(q,p,u))=h(q,p,u(q,p))=\frac{1}{2}g_q(u(q,p),u(q,p)).
%$$
The partial derivative of $H$ along the fiber in $\tau M$ is given by
$$
\partial_p h(q,p)(\delta p)=\partial_pH^1(q,p,u(q,p))(\delta p)+\partial_uH^1(q,p,u(q,p))(u(q,\delta p))=\delta p(\xi_qu(q,p)),
$$
since $\partial_uh(q,p,u(q,p))=0$. In particular, we can identify $\partial_ph(q,p)\in \tau^{**}M$ with the tangent vector $\nabla_ph(q,p)=\xi_qu(q,p)\in T_qM$. 

From now on, we make the following important assumptions:
\smallskip

\begin{itemize}
\item[•](A1): The mapping $(q,p)\mapsto u(q,p)$ is smooth on $\tau M$, so that $h$ is also smooth. 

\item[•](A2): For every $(q,p)\in\tau M$, $\partial_qH(q,p)$ belongs to $\tau H$, so that $H$ admits a symplectic gradient. Moreover, this gradient is at least of class $\mathcal{C}^2$ on $\tau M$.

\end{itemize}
The symplectic gradient $\nabla^\omega h(q,p)$ is given in local coordinates by
$$
\nabla^\omega h(q,p)=(\xi_qu,\frac{1}{2}\partial_qg(u,u)-(\partial_q\xi_{q}u)^*p),
$$
where $u=u(q,p)$ is such that $\xi_q^*p=g_q(u(q,p),\cdot)$. Moreover, as a vector field of class $\mathcal{C}^2$, it admits a local flow. Curves induced by that flow are said to satisfy the \textit{Hamiltonian geodesic equation}. Note that the Hamiltonian is constant along such curves.

We can now state our main theorem.

\begin{theo}\label{geodeq}
Let $(\h,\xi,g)$ be a weak sub-Riemannian structure on a Banach manifold $M$. Let $\tau M\hookrightarrow T^*M$ be an adapted relative cotangent bundle such that (A1) and (A2) are satisfied. Let $(\bar q(\cdot),\bar p(\cdot)):I\rightarrow \tau M$ be a curve that satisfies the \textit{Hamiltonian geodesic equation}
$$
(\dot{\bar q}(t),\dot{ \bar{p}}(t))=\nabla^\omega h(\bar q(t),\bar p(t)),\quad t\in I,
$$
which, in local coordinates, gives
$$\left\lbrace
\begin{aligned}
\dot{ \bar{q}}(t)&=\xi_{ \bar{q}(t)}\bar{u}(t),\\
\dot{ \bar{p}}(t)&=\frac{1}{2}\partial_qg_{ \bar{q}(t)}( \bar{u}(t), \bar{u}(t))-(\partial_q\xi_{ \bar{q}(t)} \bar{u}(t)^* )\bar{p}(t),
\end{aligned}\right.
$$
for every $t$ in $I$, where $ \bar{u}(t)= \bar{u}(\bar q(t),\bar p(t))$ is the unique element of $\h_{\bar q(t)}$ such that $\xi_{\bar q(t)}^*\bar p(t)=g_{\bar q(t)}( \bar{u}(t),\cdot)$.

Then the projection $\bar q(\cdot):I\rightarrow M$ is a horizontal curve and a \textbf{local geodesic} for the sub-Riemannian structure. 
%In other words, for $t_0<t_1$ close enough, there is a neighbourhood $U$ of the set $q([t_0,t_1])$ such that any other horizontal curve in $U$ with endpoints $q(t_0)$ and $q(t_1)$ has length greater than that of $q_{\vert [t_0,t_1]}$.
\end{theo}

\begin{remark}
When the metric $g$ is strong, $\tau M=TM$ and hypotheses (A1) and (A2) are of course automatically satisfied.  In the weak Riemannian case on the other hand, it is equivalent to the existence of a smooth Levi-Civita connection, see for exemple \cite{MM}. These are therefore very natural hypotheses for this theorem.
\end{remark}

\begin{remark}
The hypothesis that $h$ has a symplectic gradient for the relative cotangent bundle is equivalent to that made in \cite[Theorem 1]{GMV} on the existence of a transpose operator.
\end{remark}

\begin{proof}
First of all, if $\xi_{\bar q_0}^*\bar p_0=\xi_{\bar q(0)}\bar p(0)=0$, then $\bar q$ and $\bar p$ are constant curves, and therefore $\bar q$ is a trivial geodesic, so we can assume $\xi_{\bar q_0}^*\bar p_0\neq0$ and therefore $\bar u_0=\bar u(0)\neq 0$. Now since $\dot{\bar q}(t)=\xi_{\bar q(t)}\bar u(t)$, $ \bar{q}$ is obviously horizontal, so we just need to prove that it is a local geodesic. We can assume that $I=[0,1]$ without loss of generality. The proof uses an idea similar to that of the minimizing property of geodesics in Riemannian geometry, with a few tweaks. We need to prove that for $\varepsilon>0$ small enough, and any horizontal system $(q(\cdot),u(\cdot)):[0,\varepsilon]\rightarrow\h$ such that $q(0)=\bar q_0$ and $q(\varepsilon)=\bar q(\varepsilon)$, we have
$$
L((\bar q,\bar u)_{\vert [0,\varepsilon]}\leq L(q,u).
$$
For this, we will find a \textit{calibration} of $\bar q$: a closed 1-form $\theta$ on a neighbourhood of $\bar q_{\vert [0,\varepsilon]}$ in $M$ such that
\begin{itemize}
\item for every $t\geq0$ small enough,
$$
\theta_{\bar q(t)}(\dot{\bar q}(t))=c\sqrt{h(\bar{q}(t),\bar{p}(t))}\leq c\sqrt{g_{\bar q(t)}(\bar u(t),\bar u(t))},
$$
with $c>0$ a fixed constant, and
\item for every $(q,u)\in\h$ with $q$  close enough to $\bar q_0$,
$$
\vert\theta_q(\xi_qu)\vert\leq c\sqrt{g_{ q}(u,u)}.
$$
\end{itemize}
Indeed, once $\theta$ is found, we know that in a small neighbourhood of $\bar q_0$, it is exact. Hence, for $\varepsilon>0$ small enough, and for any horizontal system $(q(\cdot),u(\cdot))$ in this neighbourhood with $q(0)=\bar q_0$ and $q(\varepsilon)=\bar{q}(\varepsilon)$,
$$
L((\bar q,\bar u)_{\vert [0,\varepsilon]})=\frac{1}{c}\int_0^\varepsilon\theta_{\bar q(t)}(\dot{\bar q}(t))dt=\frac{1}{c}\int_0^\varepsilon\theta_{ q(t)}(\dot{q}(t))dt\leq \int_0^\varepsilon \sqrt{g_{ q(t)}(u(t),u(t))}dt=L(q,u).
$$
We now build this calibration.

We work in a coordinate neighbourhood $U\subset M$ of $\bar q_0$ in $M$ that we identify with a coordinate neighbourhood centered at $q_0$ in the Banach space $\mathrm{B}$ on which $M$ is modelled, so that we can simply write $q(0)=\bar q_0=0$. We also consider a trivialization 
$$
(\tau M\underset{M}{\oplus}\h)_{\vert U}\simeq U\times \mathrm{G}\times\mathrm{H}.
$$
We denote the local $\mathcal{C}^2$-flow of $\nabla^\omega h$ on $\tau M$ by 
\begin{equation}\label{eq:hamflow}
(t,q,p)\mapsto \Phi(t,q,p)=(\Phi_M(t,q,p),\Phi_\tau(t,q,p)
\end{equation}
Note that, in the coordinate neighbourhood $U$, $\Phi(t,0,\bar p_0)=(\bar q(t),\bar p(t))$ for $t$ small enough and $\bar p_0=\bar p(0)$.

The bundle $\tau M$ has smooth dense inclusion in $T^*M$, so we can write  that $\bar p_0$ belongs to $\mathrm{B}^*\setminus\{0\}$, and its kernel $\ker \bar p_0$ is a closed hyperplane of $\mathrm{B}$. Let 
$$
U_0=U\cap \ker \bar p_0.
$$ 
Note that $\bar q_0=0$ belongs to $U_0$, and that this $U_0$ is a neighbourhood of $0$ in $\ker p_0$.

Reducing $U_0$ if needed, we can then define the map $\varphi:]-\varepsilon,\varepsilon[\times U_0\rightarrow U$ of class $\mathcal{C}^2$ by
$$
\varphi(t,q_0)=\Phi_M\left(t,q_0,\sqrt{\frac{h(q_0,\bar p_0)}{h(\bar q_0,\bar p_0)}}\bar p_0\right),
$$
with $\Phi_M$ as in \eqref{eq:hamflow}. Do note that $h(\bar q_0,\bar p_0)=\bar p_0(\xi_{\bar q_0}\bar u_0)=g_{\bar q_0}(\bar u_0,\bar u_0)>0$. The positive number $n(q_0)=\sqrt{\frac{h(q_0,p_0)}{h(\bar q_0,\bar p_0)}}$ is here so that 
$$
h(q_0,n(q_0)\bar p_0)=h(\bar q_0,\bar p_0),\quad q\in U_0.
$$ 
For $q_0\in U_0$, the curve $t\mapsto\varphi(t,q_0)$ is the projection to $M$ of the Hamiltonian flow starting at $q_0$ with initial condition $p(0)=n(q)\bar p_0$. 

\begin{lem}
Reducing $U_0$ and $U$ if necessary, there exists $\varepsilon>0$ such that the mapping $\varphi$ is a local diffeomorphism of $]-\varepsilon,\varepsilon[\times U_0$ onto $U$.
\end{lem}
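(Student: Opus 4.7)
The plan is to apply the inverse function theorem to $\varphi$ at $(0,\bar q_0)$. Since by assumption (A2) the symplectic gradient $\nabla^\omega h$ is of class $\mathcal{C}^2$ on $\tau M$, its flow $\Phi$ is $\mathcal{C}^2$, and since $q_0\mapsto n(q_0)\bar p_0$ is smooth on a neighborhood of $\bar q_0$ (as $h$ is smooth and $h(\bar q_0,\bar p_0)>0$), the map $\varphi$ is at least $\mathcal{C}^2$. So the only thing to check is that its differential at $(0,\bar q_0)$ is a Banach isomorphism from $\R\times\ker\bar p_0$ to $\mathrm{B}$.

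First I would compute the two partial derivatives. Since $\Phi_M(0,q_0,p)=q_0$ for all $(q_0,p)$, we get $\partial_{q_0}\varphi(0,\bar q_0)=\mathrm{id}_{\ker\bar p_0}$, viewed as the inclusion $\ker\bar p_0\hookrightarrow\mathrm{B}$. On the other hand, by the Hamiltonian equation,
$$
\partial_t\varphi(0,\bar q_0)=\partial_t\Phi_M(0,\bar q_0,\bar p_0)=\nabla_p h(\bar q_0,\bar p_0)=\xi_{\bar q_0}\bar u_0,
$$
where $\bar u_0=\bar u(\bar q_0,\bar p_0)$.

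The key observation is then
$$
\bar p_0(\xi_{\bar q_0}\bar u_0)=g_{\bar q_0}(\bar u_0,\bar u_0)=2h(\bar q_0,\bar p_0)>0,
$$
so $\xi_{\bar q_0}\bar u_0\notin\ker\bar p_0$. Since $\ker\bar p_0$ is a closed hyperplane in $\mathrm{B}$ (because $\bar p_0\in\mathrm{B}^*\setminus\{0\}$), we obtain the topological direct sum decomposition $\mathrm{B}=\R\cdot\xi_{\bar q_0}\bar u_0\oplus\ker\bar p_0$. Consequently the linear map
$$
d\varphi(0,\bar q_0):(s,v)\in\R\times\ker\bar p_0\longmapsto s\,\xi_{\bar q_0}\bar u_0+v\in\mathrm{B}
$$
is a continuous bijection between Banach spaces, hence a Banach isomorphism by the open mapping theorem.

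The inverse function theorem then yields neighborhoods $]-\varepsilon,\varepsilon[\times U_0$ of $(0,\bar q_0)$ and $U$ of $\bar q_0$ (possibly after shrinking), on which $\varphi$ is a $\mathcal{C}^2$-diffeomorphism. I do not expect any real obstacle here: the only subtle point is the existence of the direct sum decomposition of $\mathrm{B}$, which is guaranteed by the fact that $\bar p_0\in\mathrm{B}^*$ (coming precisely from the assumption that $\tau M$ embeds continuously and densely in $T^*M$) and that it does not annihilate $\xi_{\bar q_0}\bar u_0$, which is why the nondegeneracy $\bar u_0\neq 0$ was singled out at the start of the proof of Theorem \ref{geodeq}.
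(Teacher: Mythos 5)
Your proof is correct and follows essentially the same route as the paper: compute $\partial_{q_0}\varphi(0,\bar q_0)=\mathrm{Id}_{\ker\bar p_0}$ and $\partial_t\varphi(0,\bar q_0)=\xi_{\bar q_0}\bar u_0$, observe that $\bar p_0(\xi_{\bar q_0}\bar u_0)=g_{\bar q_0}(\bar u_0,\bar u_0)>0$ forces this vector out of the closed hyperplane $\ker\bar p_0$, and conclude by the inverse function theorem. The only differences are that you spell out the Banach-isomorphism step (direct sum plus open mapping theorem) more explicitly than the paper does, and your constant $2h(\bar q_0,\bar p_0)$ is in fact the correct value of $\bar p_0(\xi_{\bar q_0}\bar u_0)$; neither affects the argument.
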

\begin{proof}
We just need to prove that $d\phi(0,\bar q_0)$ is bijective. For any $\delta q\in \ker p_0$, we have 
$$
\partial_q\varphi(\bar q_0,\bar q_0)\delta q=\partial_q\Phi_M(0,\bar q_0,\bar p_0)\delta q=\delta q,
$$
so $\partial_q\varphi(\bar q_0,\bar q_0)=\mathrm{Id}_{\ker \bar p_0}$. This is because $\Phi$ is the flow of a vector field. Now we just need to prove that $\partial_t\varphi(0,\bar q_0)$ does not belong to $\ker p_0$. But
$$
\partial_t\varphi(0,\bar q_0)=\partial_t\Phi_M(0,\bar q_0,\bar p_0)=\dot{ \bar{q}}(0)=\xi_{\bar q_0}\bar u_0.
$$
Since $\bar p_0(\xi_{\bar q_0}\bar u_0)=h(\bar q_0,\bar p_0)>0$, $\partial_t\varphi(0,\bar q_0)$ does not belong to $\ker \bar p_0$.
\end{proof}
Now, for $q$ in $U$, let $(t(q),q_0(q))=\varphi^{-1}(q)$. This mapping is of class at least $\mathcal{C}^2$, same as $\varphi$. This lets us define on $U$ the one-form 
$$
\theta(q)=\Phi_\tau(t(q),q_0(q),n(q_0(q))\bar p_0)\in \tau _qM\subset \mathrm{B}^*,
$$
with $\Phi_\tau$ defined as in \eqref{eq:hamflow}. In other words, $\theta$ is given by the propagation to $U$ of $q_0\mapsto n(q_0)p_0$ on $U_0$, so that
$$
\theta(\varphi(t,q_0))=p(t),
$$ 
where $t\mapsto(q(t),p(t))$ follows the Hamiltonian flow with initial condition $q(0)=q_0$ and $p(0)=n(q_0)\bar p_0$. Let us prove that $\theta$ is a calibration of $\bar q.$

Fix $q$ in $U$. We can write $\theta(q)=p(t(q))$, where $(q(t),p(t))$ satisfies the Hamiltonian geodesic equations with $(q(0),p(0))=(q_0(q),n(q_0(q))\bar p_0)$. Therefore, 
$$
\vert \theta(q)\xi_qu\vert =\vert p(t(q))\xi_qu\vert =\vert g_q(u(q,p(t(q)),u)\vert \leq \underset{=\sqrt{h(q,p(t(q)))}}{\underbrace{\sqrt{g_q(u(q,p(t(q)),u(q,p(t(q)))}}}\sqrt{g_q(u,u)}.
$$
Since the reduced Hamiltonian is constant along the Hamiltonian flow, we have     
$$
h(q,\theta(q))=h(q,p(t(q)))=h(q_0(q),n(q_0(q))\bar p_0)=h(\bar q_0,\bar p_0).
$$
Letting $c=\sqrt{h(\bar q_0,\bar p_0)}>0$, we get, for any $(q,u)\in\h_{\vert U}$,
$$
\vert\theta_{q}\xi_qu\vert\leq c\sqrt{g_q(u,u)}.
$$
Therefore, if we can prove that $\theta$ is closed, we do have that $\theta$ calibrates $\bar q$. Since $\varphi$ is a diffeomorphism, it is enough to prove the following lemma.

\begin{lem}
We have $\varphi^*\theta=c^2dt$ on $]-\varepsilon,\varepsilon[\times U_0$.
\end{lem}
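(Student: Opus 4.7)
The natural approach is to interpret $\varphi^*\theta$ as the pull-back of the canonical Liouville 1-form on $\tau M$ along the full Hamiltonian flow map, rather than through $\theta$ and $\varphi$ separately. Let $\alpha$ denote this Liouville form, characterized by $\alpha_{(q,p)}(V) = p(d\pi_M V)$ with $\pi_M:\tau M \to M$, and define $\Psi : {]{-}\varepsilon,\varepsilon[} \times U_0 \to \tau M$ by $\Psi(t,q_0) = \Phi(t, q_0, n(q_0)\bar p_0)$, with $n(q_0) = \sqrt{h(q_0,\bar p_0)/h(\bar q_0,\bar p_0)}$. Since $\pi_M \circ \Psi = \varphi$ and $\theta \circ \varphi$ is exactly the $\tau M$-component of $\Psi$, a direct unpacking of the definitions yields $\varphi^*\theta = \Psi^*\alpha$. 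Using $\omega = -d\alpha$, the lemma reduces to showing that $\Psi^*\omega = 0$ (so that $\varphi^*\theta$ is closed) and that its evaluations in the $\partial_t$ and $\partial_{q_0}$ directions are the announced constant and zero.

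To establish closedness I would combine two facts. First, the Hamiltonian flow $\Phi_s$ is a symplectomorphism of $(\tau M,\omega)$: by Cartan's magic formula and assumption (A2), $\mathcal{L}_{X_h}\omega = d(\iota_{X_h}\omega) + \iota_{X_h}d\omega = d(dh) = 0$. Second, writing $\Psi(t,q_0) = \Phi_t(j(q_0))$ with $j(q_0) = (q_0, n(q_0)\bar p_0)$, one gets $\Psi^*\omega = j^*\Phi_t^*\omega = j^*\omega$ on vectors tangent to $\{t\}\times U_0$; and a direct computation in canonical coordinates yields $(j^*\omega)(\delta q_0,\delta q_0') = \bar p_0(\delta q_0)\, dn(\delta q_0') - \bar p_0(\delta q_0')\, dn(\delta q_0)$, which vanishes since $T_{q_0}U_0 \subset \ker\bar p_0$. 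On the remaining mixed direction $(\partial_t, \delta q_0)$, use $d\Psi(\partial_t) = X_h(\Psi)$ together with $\iota_{X_h}\omega = dh$ to obtain $\omega(X_h, d\Psi\,\delta q_0) = d(h\circ \Psi)(\delta q_0) = 0$, since $h\circ\Psi \equiv h(\bar q_0,\bar p_0) = c^2$ by construction of $n$.

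For the coefficients, evaluate $(\varphi^*\theta)_{(t,q_0)}(\partial_t) = p(t)(\dot q(t)) = p(t)(\partial_p h(q(t),p(t)))$; since $h$ is homogeneous of degree two in $p$ (as $p\mapsto u(q,p)$ is linear and $h = \tfrac{1}{2}g_q(u,u)$), Euler's identity and conservation of $h$ along the flow yield a constant value proportional to $c^2$, independent of $(t,q_0)$. At $t=0$ and any $\delta q_0 \in T_{q_0}U_0$, $(\varphi^*\theta)(\delta q_0) = n(q_0)\bar p_0(\delta q_0) = 0$. Writing $\varphi^*\theta = f(t,q_0)\,dt + \beta_t$ with $\beta_t$ a $t$-dependent 1-form on $U_0$, the closedness from the previous step combined with $f$ being constant forces $\partial_t\beta_t = 0$, so $\beta_t\equiv\beta_0 = 0$, completing the proof. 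The main subtlety lies in the symplectomorphism property of $\Phi_s$ in the weak symplectic setting of $\tau M$: assumption (A2) is exactly what makes Cartan's formula rigorous here. Conceptually this is the Hamilton--Jacobi construction of a generating function $S$ (a constant multiple of $t$) for the Lagrangian submanifold $\theta(U)\subset \tau M$, made possible by the choice of $n(q_0)$ ensuring that $h$ is constant on the initial surface $j(U_0)$.
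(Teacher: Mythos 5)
Your argument is correct, but it takes a genuinely different route from the paper's. You recast $\varphi^*\theta$ as the pull-back $\Psi^*\alpha$ of the Liouville form along the lifted map $\Psi(t,q_0)=\Phi(t,q_0,n(q_0)\bar p_0)$ into $\tau M$, and obtain closedness from three symplectic facts: the flow of $\nabla^\omega h$ preserves $\omega=d\alpha$ (Cartan's formula, legitimate here because $\iota_{\nabla^\omega h}\,\omega=dh$ holds by the very definition of the symplectic gradient under (A2), and the vector field is $\mathcal{C}^2$); the initial surface $j(U_0)=\{(q_0,n(q_0)\bar p_0)\}$ is isotropic because $TU_0\subset\ker\bar p_0$; and $h$ is constant both along the flow and on $j(U_0)$ by the choice of $n$. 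The paper never lifts the problem: it evaluates $(\varphi^*\theta)_{(t_0,q_0)}$ on $(\delta t,\delta q_0)$ directly and kills the $\delta q_0$-component by differentiating $t\mapsto p(t)(\delta q(t))$ along the flow, substituting the Hamiltonian and variational equations, and recognizing the integrand as $\partial_s$ of the conserved energy $\tfrac{1}{2}g_{q(s,t)}(u(s,t),u(s,t))$ --- a bare-hands Jacobi-field computation that avoids invoking the symplectomorphism property of the flow on the weak symplectic manifold $\tau M$. Your version is the standard Hamilton--Jacobi / Lagrangian-submanifold argument: shorter and more conceptual once the symplectic machinery is granted, at the price of having to justify $\mathcal{L}_{\nabla^\omega h}\,\omega=0$ in the Banach, weak-symplectic setting, which your appeal to (A2) does handle; the paper's computation is longer but entirely elementary and self-contained. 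One small point: Euler's identity for the degree-two homogeneous $h$ gives $p(\partial_ph)=2h$, so the constant you obtain is $2c^2=g_{\bar q_0}(\bar u_0,\bar u_0)$ rather than $c^2$; the paper's own evaluation of $p(t)(\partial_t\varphi)$ contains the same factor-of-two slip (one has $p(\xi_qu(q,p))=g_q(u,u)=2h$, not $h$), and in any case only the closedness of $\theta$ and the constancy of $h$ along the flow are used in the proof of the theorem, so nothing downstream is affected.
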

\begin{proof}
Fix $(t_0,q_0)$ in $]-\varepsilon,\varepsilon[\times U_0$, and let $t\mapsto(q(t),p(t))$ follow the Hamiltonian flow with initial condition $q(0)=q_0$ and $p(0)=n(q_0)\bar p_0$. In particular, $\theta(\varphi(t,q))=p(t).$ We will alos denote $u(t)=u(q(t),p(t))$ the corrsponding control.

Then for every $(\delta t,\delta q)\in\R\times\ker \bar p_0$,
\begin{equation}\label{vtheta}
\begin{aligned}
(\varphi^*\theta)_{(t_0,q_0)}(\delta t,\delta q_0)&=
\theta(\varphi(t_0,q_0))(\partial_t\varphi(t_0,q_0)\delta t+\partial_{q_0}\varphi(t_0,q_0)\delta q_0)\\
&=
\Phi_\tau(t_0,q_0,n(q_0)p_0)(\partial_t\varphi(t_0,q_0)\delta t+\partial_{q_0}\varphi(t_0,q_0)\delta q_0)\\
&=p(t_0)(\partial_t\varphi(t_0,q_0)\delta t)+p(t_0)(\partial_{q_0}\varphi(t,q_0)\delta q_0).
\end{aligned}
\end{equation}
Now recall that $\partial_t\varphi(t,q)=\dot{q}(t)=\xi_{q(t)}u(q(t),p(t)),$ so that for every time $t$,
\begin{equation}\label{vthtat}
\begin{aligned}
p(t)(\partial_t\varphi(t,q))&=\frac{1}{2}g_{q(t)}(u(q(t),p(t)),u(q(t),p(t)))=h(q(t),p(t))\\
&=h(q(0),p(0))=h(q_0,n(q_0\bar p_0)=h(\bar q_0,\bar p_0)=c^2.
\end{aligned}
\end{equation}

Now let us check that $p(t)(\partial_{q_0}\varphi(t,q_0)\delta q_0)=0$ for every $\delta q_0$ in $\ker \bar p_0$ and $t$ in $]-\varepsilon,\varepsilon[$. For small $s>0$ and $t\in]-\varepsilon,\varepsilon[,$ denote $q(s,t)=\varphi(t,q_0+s\delta q_0)$. For each $s$, $t\mapsto q(s,t)$ is horizontal, with associated control $t\mapsto u(s,t)$ that can be taken $\mathcal{C}^2$ in $s$ and such that $g_{q(s,t)}(u(s,t),u(s,t))=2c$ for every $(s,t)$ \footnote{Each $q(s,\cdot)$ is the projection to $M$ of a curve $(q(s,\cdot),p(s,\cdot))$ that follows the Hamiltonian flow with initial condition $(q_0+s\delta q_0,n(q_0+s\delta q_0)\bar p_0)$. Then $u(s,t)=u(q(s,t),p(s,t))$, which is $\mathcal{C}^2$ in $(t,s)$.}. Let 
$$
\delta q(t)=\partial_sq(0,t)=\partial_{q_0}\varphi(t,q_0)\delta q_0
$$ 
and $\delta u=\partial_su(s,t)_{s=0}$. Since $(t,s)\mapsto q(s,t)$ is of class at least $\mathcal{C}^2$, we have
$$
\dot{\delta q}(t)=\partial_q(\xi_{q(t)}u(0,t))\delta q(t)+\xi_{q(t)}\delta u(t).
$$
Remarking that $p(0)(\delta q(0))=n(q_0)\bar p_0(\delta q_0)=0$, we get
$$\begin{aligned}
p(t_0)(\delta q(t_0))&=
\int_0^{t_0}\frac{d}{dt}\left( p(t)(\delta q(t))\right)dt\\
&= \int_0^{t_0}\left(-\partial_qh(q(t),p(t))\delta q(t)+p(t)\partial_q(\xi_{q(t)}u(t))\delta q(t)+p(t)\xi_{q(t)}\delta u(t)\right)dt.
\end{aligned}
$$
But $\partial_qh(q(t),p(t))=\partial_qH^1(q(t),p(t),u(t))=p(t)\partial_q(\xi_{q(t)}u(t))-\frac{1}{2}(\partial_qg_{q(t)})(u(t),u(t))$, and $p(t)\xi_{q(t)}\delta u(t)=g_{q(t)}(u(t),\delta u(t))$, so that
$$
\begin{aligned}
p(t_0)(\delta q(t_0))&=
\int_0^{t_0}\frac{d}{dt}\left( p(t)(\delta q(t))\right)dt\\
&= \int_0^{t_0}\left(\frac{1}{2}\partial_qg_{q(t)}(u(t),u(t))\delta q(t)+g_{q(t)}(u(t),\delta u(t))\right)dt\\
&= \int_0^{t_0}\partial_s\left(\underset{=h(q_0,n(q_0)\bar p_0)=c}{\underbrace{\frac{1}{2}g_{q(s,t)}(u(s,t),u(s,t))}}\right)_{s=0}dt\\
&=0.
\end{aligned}
$$
We get
$$
\varphi^*\theta=c^2dt.
$$
\end{proof}
Since $\varphi$ is a diffeomorphism, $\theta$ is indeed closed, which concludes the proof.
\end{proof}

\bibliographystyle{plain}
\bibliography{biblio}

\end{document}